\documentclass[10pt]{amsart}
\usepackage{amsmath,amscd}
\usepackage{amsbsy}
\usepackage{amssymb}
\usepackage{amscd,amsthm}

\usepackage[all,cmtip]{xy}

\newtheorem{thm}{Theorem}\numberwithin{thm}{section}
\newtheorem{lem}[thm]{Lemma}
\newtheorem{prop}[thm]{Proposition}
\newtheorem{cor}[thm]{Corollary}
\newtheorem{exam}[thm]{Example}
\newtheorem{rema}[thm]{Remark}

\newtheorem{defi}[thm]{Definition}

\newtheorem*{thm2}{Theorem}

\newtheorem*{con2}{Conjecture}

\begin{document}
\begin{center}
\huge{Tilting objects on some global quotient stacks}\\[1cm]
\end{center}
\begin{center}

\large{Sa$\mathrm{\check{s}}$a Novakovi$\mathrm{\acute{c}}$}\\[0,5cm]
\end{center}
{\small \textbf{Abstract}. 
We prove the existence of tilting objects on some global quotient stacks. As a consequence we provide further evidence for a conjecture on the Rouquier dimension of derived categories formulated by Orlov.
\begin{center}
\tableofcontents
\end{center}
\section{Introduction}
Geometric tilting theory started with the construction of tilting bundles on the projective space by Beilinson \cite{BE}. Later Kapranov \cite{KA}, \cite{KA1}, \cite{KA2} constructed tilting bundles for certain homogeneous spaces. Further examples can be obtained from certain blow ups and taking projective bundles \cite{CM}, \cite{CDR}, \cite{O}. A smooth projective $k$-scheme admitting a tilting object satisfies very strict conditions, namely its Grothendieck group is a free abelian group of finite rank and the Hodge diamond is concentrated on the diagonal, at least in characteristic zero \cite{BH}.

However, it is still an open problem to give a complete classification of smooth projective $k$-schemes admitting a tilting object. In the case of curves one can prove that a smooth projective algebraic curve has a tilting object if and only if the curve is a one-dimensional Brauer--Severi variety. Recall that a Brauer--Severi variety is a $k$-scheme becoming isomorphic to the projective space after base change to the algebraic closure $\bar{k}$. In this sense, one-dimensional Brauer--Severi varieties are very close to the projective line. But already for smooth projective algebraic surfaces there is currently no classification of surfaces admitting such a tilting object. It is conjectured that a smooth projective algebraic surface has a tilting bundle if and only if it is rational (see \cite{BSH}, \cite{H}, \cite{HP}, \cite{HP1}, \cite{HP2}, \cite{KI} and \cite{P} for results in this direction).

In the present work, we will focus on certain global quotient stacks and prove the existence of tilting objects for their derived category. Several examples of stacks admitting a tilting object are known (see \cite{IU}, \cite{IU1}, \cite{KAW}, \cite{ME}, \cite{OU} and \cite{OUE}). But as in the case of schemes, one has to settle for existence criteria for stacks admitting a tilting object. The first main result of the present paper is the following:
\begin{thm2}(Theorem 4.2)
Let $X$ be a smooth projective $k$-scheme and $G$ a finite group acting on $X$ with $\mathrm{char}(k) \nmid \mathrm{ord}(G)$. Suppose there is a $\mathcal{T}^{\bullet}\in D_G(\mathrm{Qcoh}(X))$ which, considered as an object in $D(\mathrm{Qcoh}(X))$, is a tilting object on $X$. Denote by $k[G]$ the regular representation of $G$, then $\mathcal{T}^{\bullet}\otimes k[G]$ is a tilting object on $[X/G]$.
\end{thm2}
This theorem enables us to find examples of quotient stacks $[X/G]$ admitting a tilting object. Notice that Elagin \cite{EL1} proved the existence of full strongly exceptional collections on $[X/G]$. Since there are $k$-schemes that have tilting objects, but not a full strongly exceptional collection (see Proposition 4.8), Theorem 4.2 indeed gives us some new examples (see Example 4.7). Moreover, exploiting the derived McKay correspondence, Theorem 4.2 also provides us with some crepant resolutions admitting a tilting object (see Corollaries 4.11, 4.12 and 4.13).

Next, we prove a result generalizing and harmonizing results of Bridgeland and Stern \cite{BS}, Theorem 3.6 (see also \cite{BRI}, Proposition 4.1) and Brav \cite{BR}, Theorem 4.2.1. For a finite group $G$ acting on $X$, let $\mathcal{E}$ be an equivariant locally free sheaf and $\mathbb{A}(\mathcal{E})$ the affine bundle. Suppose $\mathrm{char}(k) \nmid \mathrm{ord}(G)$ and denote by $\pi:\mathbb{A}(\mathcal{E})\rightarrow X$ the projection.  
\begin{thm2}(Theorem 5.1)
Let $X$ be a smooth projective $k$-scheme, $G$ a finite group acting on $X$ and $\mathcal{E}$ an equivariant locally free sheaf of finite rank. Suppose $\mathcal{T}$ is a tilting bundle on $[X/G]$. If $H^i(X,\mathcal{T}^{\vee}\otimes \mathcal{T}\otimes S^l(\mathcal{E}))=0$ for all $i\neq 0$ and all $l>0$, then $[\mathbb{A}(\mathcal{E})/G]$ admits a tilting bundle, too.
\end{thm2}
If $X$ is a Fano variety, $\mathcal{E}=\omega_X$ and $G=1$ we obtain the result in \cite{BS} and if $X=\mathrm{Spec}(\mathbb{C})$ the result in \cite{BR}. It is also natural to consider projective bundles of equivariant locally free sheaves $\mathcal{E}$ on $X$. We prove the following generalization of a result of Costa, Di Rocco and Mir\'o-Roig \cite{CM}:
\begin{thm2}(Theorem 5.4)
Let $X$, $G$ and $\mathcal{E}$ be as in Theorem 5.1. If $[X/G]$ has a tilting bundle, then so does $[\mathbb{P}(\mathcal{E})/G]$. 
\end{thm2}
In the general case where $G$ is an arbitrary algebraic group one cannot expect to have a result such as Theorem 4.2 but nevertheless, under some mild conditions, there are always semiorthogonal decompositions \cite{EL1}. As an application of the above results we provide some further evidence for a conjecture firstly formulated by Orlov \cite{O1} for schemes and extended by Ballard and Favero \cite{BF} to certain Deligne--Mumford stacks $\mathcal{X}$. It is the following dimension conjecture about the Rouquier dimension \cite{RO} of the triangulated category $D^b(\mathcal{X})$:
\begin{con2}(\cite{BF})
\textnormal{Let $\mathcal{X}$ be a smooth and tame Deligne--Mumford stack of finite type over $k$ with quasiprojective coarse moduli space, then $\mathrm{dim}(D^b(\mathcal{X}))=\mathrm{dim}(\mathcal{X})$}.
\end{con2}
Assuming $\mathrm{char}(k) \nmid \mathrm{ord}(G)$, we prove:  

\begin{thm2}(Theorem 6.8)
The dimension conjecture holds for:
\begin{itemize}
 \item[\bf(i)] quotient stacks $[\mathbb{P}(\mathcal{E})/G]$ as in Theorem 5.4, provided $[X/G]$ has a tilting bundle and $k$ is perfect.
	\item[\bf(ii)] quotient stacks $[X/G]$ over a perfect field $k$, where $X$ is a Brauer--Severi variety corresponding to a central simple algebra $A$ and $G\subset \mathrm{Aut}(X)=A^*/k^{\times}$ a finite subgroup such that the action lifts to an action of $A^*$.  
	\item[\bf(iii)] quotient stacks $[\mathrm{Grass}(d,n)/G]$ over an algebraically closed field $k$ of characteristic zero, provided $G\subset\mathrm{PGL}_n(k)$ is a finite subgroup acting linearly on $\mathrm{Grass}(d,n)$.
\item[\bf (iv)] $G$-Hilbert schemes $\mathrm{Hilb}_G(\mathbb{P}^n)$ over an algebraically closed field $k$ of characteristic zero, provided $n\leq 3$ and $G\subset \mathrm{PGL}_{n+1}(k)$ is a finite subgroup acting linearly on $\mathbb{P}^n$ and $\omega_{\mathbb{P}^n}$ is locally trivial in $\mathrm{Coh}_G(\mathbb{P}^n)$.
	\end{itemize} 
\end{thm2}
{\small \textbf{Acknowledgement}. This paper is based on a part of my Ph.D. thesis which was supervised by Stefan Schr\"oer whom I would like to thank for a lot of comments and helpful discussions. I also like to thank Markus Perling, Nicolas Perrin and Alexander Samokhin for fruitful conversations. Finally, I would like to thank the referee for the careful review and the valuable comments, which provided insights that helped improve the paper.\\ 

{\small \textbf{Conventions}. Throughout this work $k$ is an arbitrary field unless stated otherwise. Moreover, for a finite group $G$ acting on a $k$-scheme $X$, we always assume $\mathrm{char}(k) \nmid \mathrm{ord}(G)$.

\section{Generalities on equivariant derived categories}

Let $X$ be a quasiprojective $k$-scheme and $G$ a finite group acting on $X$. A sheaf $\mathcal{F}$ on $X$ is called \emph{invariant} if there are isomorphisms $g^*\mathcal{F}\simeq \mathcal{F}$ for all $g\in G$. The full additive subcategory of invariant coherent sheaves however is not abelian and thus not suitable for forming derived categories. One has to pass to $G$-linearizations. 

A \emph{G-linearization}, also called an \emph{equivariant structure}, on $\mathcal{F}$ is given by isomorphisms $\lambda_g\colon \mathcal{F}\stackrel{\sim}\rightarrow g^*\mathcal{F}$ for all $g\in G$ subject to $\lambda_1=\mathrm{id}_{\mathcal{F}}$ and $\lambda_{gh}=h^*\lambda_g\circ \lambda_h$. In the present work we also call such sheaves \emph{equivariant sheaves}. Equivariant sheaves are therefore pairs $(\mathcal{F}, \lambda)$, consisting of a sheaf $\mathcal{F}$ on $X$ and a choice of an equivariant structure $\lambda$. Clearly, an equivariant sheaf is invariant, but the other implication is wrong in general. There is an obstruction for an invariant sheaf against having an equivariant structure in terms of the second group cohomology $H^2(G,k^*)$ (see \cite{PL}, Lemma 1).

\begin{rema}
\textnormal{For a definition of linearization in the case where an arbitrary algebraic group acts on an arbitrary scheme we refer to \cite{BL}, \cite{EL1} or \cite{EL2}.}
\end{rema}  
If $(\mathcal{F},\lambda)$ and $(\mathcal{G},\mu)$ are two equivariant sheaves on $X$, the vector space $\mathrm{Hom}(\mathcal{F},\mathcal{G})$ becomes a $G$-representation via $g\cdot f:=(\mu_g)^{-1}\circ g^*f\circ \lambda_g$ for $f\colon \mathcal{F}\rightarrow \mathcal{G}$. The equivariant quasi-coherent respectively coherent sheaves together with $G$-invariant morphisms $\mathrm{Hom}_G(\mathcal{F},\mathcal{G}):=\mathrm{Hom}(\mathcal{F},\mathcal{G})^G$ form abelian categories with enough injectives (see \cite{BR}, \cite{PL}, \cite{PL1}) which we denote by $\mathrm{Qcoh}_G(X)$ respectively $\mathrm{Coh}_G(X)$. We put $D_G(\mathrm{Qcoh}(X)):=D(\mathrm{Qcoh}_G(X))$ and $D^b_G(X):=D^b(\mathrm{Coh}_G(X))$. 

Let $X$ and $Y$ be quasiprojective $k$-schemes on which the finite group $G$ acts. A \emph{$G$-morphisms} between $X$ and $Y$ is given by a morphism $\phi\colon X\rightarrow Y$ such that $\phi\circ g=g\circ \phi$ for all $g\in G$. Then we have the pullback $\phi^*\colon \mathrm{Coh}_G(Y)\rightarrow \mathrm{Coh}_G(X)$ and the pushforward $\phi_*\colon \mathrm{Coh}_G(X)\rightarrow \mathrm{Coh}_G(Y)$. The functors $\phi^*$ and $\phi_*$ are adjoint; analogously for $\mathbb{L}\phi^*$ and $\mathbb{R}\phi_*$. For $(\mathcal{F},\lambda), (\mathcal{G},\mu)\in \mathrm{Coh}_G(X)$ there is a canonical equivariant structure on $\mathcal{F}\otimes\mathcal{G}$ coming from the maps $\lambda_g\otimes\mu_g$ (see \cite{BL}).

By definition, objects of $D^b_G(X)$ are bounded complexes of equivariant coherent sheaves. It is clear that each such complex defines an equivariant structure on the corresponding object of $D^b(X)$. Now let $\mathcal{C}$ be the category of equivariant objects of $D^b(X)$, i.e. complexes $\mathcal{F}^{\bullet}$ with isomorphisms $\lambda_g\colon \mathcal{F}^{\bullet}\stackrel{\sim}\rightarrow g^*\mathcal{F}^{\bullet}$ satisfying $\lambda_{gh}=h^*\lambda_g\circ \lambda_h$. This category is in fact triangulated and it is a natural fact that $D^b_G(X)$ and $\mathcal{C}$ are equivalent (see \cite{C}, Proposition 4.5 or \cite{EL2}). 

There is also another description of the derived categories needed in the present work. Consider the global quotient stack $[X/G]$, produced by an action of a finite group $G$ on $X$ (see \cite{V}, Example 7.17). The quasi-coherent sheaves on $[X/G]$ are equivalent to equivariant quasi-coherent sheaves on $X$ (see \cite{V}, Example 7.21). Henceforth, the abelian categories $\mathrm{Qcoh}([X/G])$ and $\mathrm{Qcoh}_G(X)$ are equivalent and therefore give rise to equivalent derived categories $D_G(\mathrm{Qcoh}(X))\simeq D(\mathrm{Qcoh}([X/G]))$. For any two objects $\mathcal{F}^{\bullet}, \mathcal{G}^{\bullet}\in D_G(\mathrm{Qcoh}(X))$ we write $\mathrm{Hom}_G(\mathcal{F}^{\bullet},\mathcal{G}^{\bullet}):=\mathrm{Hom}_{D_G(\mathrm{Qcoh}(X))}(\mathcal{F}^{\bullet},\mathcal{G}^{\bullet})$.

Analogously, we get $D^b_G(X)\simeq D^b(\mathrm{Coh}([X/G]))$. Note that for $X=pt$, $\mathrm{Coh}([pt/G])\simeq \mathrm{Coh}_G(pt)\simeq \mathrm{Rep}_k(G)$ is the category of finite-dimensional representations. Moreover, for a finite group $G$ with $\mathrm{char}(k) \nmid \mathrm{ord}(G)$, the functor $(-)^G:\mathrm{Coh}([pt/G])\rightarrow \mathrm{Coh}(pt), V\mapsto V^G$, is exact (see \cite{AOV}, Proposition 2.5). For arbitrary $\mathcal{F}^{\bullet}, \mathcal{G}^{\bullet}\in D^b_G(X)$, the finite group $G$ also acts on the vector space $\mathrm{Hom}(\mathcal{F}^{\bullet},\mathcal{G}^{\bullet}):=\mathrm{Hom}_{D^b(X)}(\mathcal{F}^{\bullet},\mathcal{G}^{\bullet})$. The exactness of $(-)^G$ yields
\begin{center}
$\mathrm{Hom}_G(\mathcal{F}^{\bullet},\mathcal{G}^{\bullet})\simeq \mathrm{Hom}(\mathcal{F}^{\bullet},\mathcal{G}^{\bullet})^G$. 
\end{center}
The exactness of $(-)^G$ also implies the following fact (see \cite{BFK}, Lemma 2.2.8):
\begin{lem}
Let $X$ be smooth quasiprojective $k$-scheme and $G$ a finite group acting on $X$. For arbitrary $\mathcal{F}^{\bullet},\mathcal{G}^{\bullet}\in D^b_G(X)$ the following holds for all $i\in \mathbb{Z}$:
\begin{center}
$\mathrm{Hom}_{G}(\mathcal{F}^{\bullet},\mathcal{G}^{\bullet}[i])\simeq \mathrm{Hom}(\mathcal{F}^{\bullet},\mathcal{G}^{\bullet}[i])^G$.
\end{center}
\end{lem}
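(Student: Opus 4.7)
The plan is to bootstrap from the $i=0$ case, which has already been recorded just above the lemma statement as $\mathrm{Hom}_G(\mathcal{F}^{\bullet},\mathcal{G}^{\bullet})\simeq \mathrm{Hom}(\mathcal{F}^{\bullet},\mathcal{G}^{\bullet})^G$, by applying it term by term to a resolution and then commuting $(-)^G$ past cohomology.

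Concretely, I would first pick a $K$-injective resolution $\mathcal{G}^{\bullet}\to \mathcal{I}^{\bullet}$ in $\mathrm{Qcoh}_G(X)$ (or simply a bounded-below injective resolution, since $\mathcal{G}^{\bullet}\in D^b_G(X)$ and $\mathrm{Qcoh}_G(X)$ has enough injectives, as noted in the text). The key observation, valid because $\mathrm{char}(k)\nmid \mathrm{ord}(G)$, is that the forgetful functor $\mathrm{Qcoh}_G(X)\to \mathrm{Qcoh}(X)$ preserves injectives: its left adjoint $\mathcal{F}\mapsto \bigoplus_{g\in G} g^{*}\mathcal{F}$ (with the regular equivariant structure) is exact. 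Consequently $\mathcal{I}^{\bullet}$ also computes the non-equivariant Ext groups, so
\begin{equation*}
\mathrm{Hom}_G(\mathcal{F}^{\bullet},\mathcal{G}^{\bullet}[i])=H^i\,\mathrm{Hom}^{\bullet}_G(\mathcal{F}^{\bullet},\mathcal{I}^{\bullet}),\qquad \mathrm{Hom}(\mathcal{F}^{\bullet},\mathcal{G}^{\bullet}[i])=H^i\,\mathrm{Hom}^{\bullet}(\mathcal{F}^{\bullet},\mathcal{I}^{\bullet}),
\end{equation*}
where $\mathrm{Hom}^{\bullet}$ denotes the total Hom complex.

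Next, I would apply the $i=0$ identification at each degree of the total Hom complex to obtain the equality of complexes of $k$-vector spaces
\begin{equation*}
\mathrm{Hom}^{\bullet}_G(\mathcal{F}^{\bullet},\mathcal{I}^{\bullet})=\bigl(\mathrm{Hom}^{\bullet}(\mathcal{F}^{\bullet},\mathcal{I}^{\bullet})\bigr)^{G}.
\end{equation*}
Taking $H^i$ of both sides and using that $(-)^G\colon \mathrm{Rep}_k(G)\to \mathrm{Vect}_k$ is exact (the invoked Proposition 2.5 of \cite{AOV}, equivalently Maschke under the char condition), $(-)^G$ commutes with cohomology, giving
\begin{equation*}
\mathrm{Hom}_G(\mathcal{F}^{\bullet},\mathcal{G}^{\bullet}[i])\simeq H^i\bigl(\mathrm{Hom}^{\bullet}(\mathcal{F}^{\bullet},\mathcal{I}^{\bullet})\bigr)^{G}\simeq \mathrm{Hom}(\mathcal{F}^{\bullet},\mathcal{G}^{\bullet}[i])^{G},
\end{equation*}
as required.

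The only real obstacle is the first bullet: making sure that an equivariant injective resolution is also suitable for computing the plain (non-equivariant) Ext, so that the same complex $\mathcal{I}^{\bullet}$ can be used in both interpretations. Smoothness and quasiprojectivity are just what is needed to have well-behaved injective or $K$-injective resolutions on both sides, while the assumption $\mathrm{char}(k)\nmid \mathrm{ord}(G)$ is used twice: once to pass injectives through the forgetful functor, and once to commute $(-)^G$ with the cohomology of the Hom complex. After these two ingredients are in place, everything else is formal.
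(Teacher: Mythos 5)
Your argument is correct and rests on exactly the ingredient the paper invokes: the exactness of $(-)^G$ under the standing assumption $\mathrm{char}(k)\nmid\mathrm{ord}(G)$, applied to the $G$-invariants of the total Hom complex into an equivariant injective resolution (the paper itself gives no proof, deferring to \cite{BFK}, Lemma 2.2.8, and your write-up supplies the standard details, including the correct observation that the forgetful functor preserves injectives because its left adjoint $\mathcal{F}\mapsto\bigoplus_{g\in G}g^*\mathcal{F}$ is exact). The only quibble is that this last fact holds without the characteristic assumption, which is genuinely needed only for commuting $(-)^G$ with cohomology.
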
 

\section{Geometric tilting theory}
In this section we recall some facts of geometric tilting theory. We first recall the notions of generating and thick subcategories (see \cite{BV}, \cite{RO}).\\

Let $\mathcal{D}$ be a triangulated category and $\mathcal{C}$ a triangulated subcategory. The subcategory $\mathcal{C}$ is called \emph{thick} if it is closed under isomorphisms and direct summands. For a subset $A$ of objects of $\mathcal{D}$ we denote by $\langle A\rangle$ the smallest full thick subcategory of $\mathcal{D}$ containing the elements of $A$. 
Furthermore, we define $A^{\perp}$ to be the subcategory of $\mathcal{D}$ consisting of all objects $M$ such that $\mathrm{Hom}_{\mathcal{D}}(E[i],M)=0$ for all $i\in \mathbb{Z}$ and all elements $E$ of $A$. We say that $A$ \emph{generates} $\mathcal{D}$ if $A^{\perp}=0$. Now assume $\mathcal{D}$ admits arbitrary direct sums. An object $B$ is called \emph{compact} if $\mathrm{Hom}_{\mathcal{D}}(B,-)$ commutes with direct sums. Denoting by $\mathcal{D}^c$ the subcategory of compact objects we say that $\mathcal{D}$ is \emph{compactly generated} if the objects of $\mathcal{D}^c$ generate $\mathcal{D}$. One has the following important theorem (see \cite{BV}, Theorem 2.1.2).
\begin{thm}
Let $\mathcal{D}$ be a compactly generated triangulated category. Then a set of objects $A\subset \mathcal{D}^c$ generates $\mathcal{D}$ if and only if $\langle A\rangle=\mathcal{D}^c$.  
\end{thm}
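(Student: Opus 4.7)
The plan is to establish both implications separately. The reverse implication is straightforward and uses only the thickness axioms plus compact generation, whereas the forward implication requires the machinery of localizing subcategories and Bousfield--Neeman localization for compactly generated triangulated categories.

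For the $(\Leftarrow)$ direction, assume $\langle A\rangle=\mathcal{D}^c$ and let $M\in A^{\perp}$. I would consider the full subcategory
$$\mathcal{S}=\{X\in\mathcal{D}:\mathrm{Hom}_{\mathcal{D}}(X[i],M)=0\text{ for all }i\in\mathbb{Z}\}.$$
Since the cohomological functor $\mathrm{Hom}_{\mathcal{D}}(-,M)$ turns triangles into long exact sequences and respects direct summands, $\mathcal{S}$ is a thick subcategory of $\mathcal{D}$ containing $A$; hence $\mathcal{S}\supseteq\langle A\rangle=\mathcal{D}^c$. Because $\mathcal{D}$ is compactly generated, $\mathcal{D}^c$ generates $\mathcal{D}$ by definition, forcing $M=0$. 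Thus $A^{\perp}=0$.

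For the $(\Rightarrow)$ direction, assume $A^{\perp}=0$, and let $\mathrm{Loc}(A)$ denote the smallest localizing subcategory (i.e.\ triangulated subcategory closed under arbitrary direct sums) containing $A$. The argument proceeds in two steps. First, I would verify $\mathrm{Loc}(A)^{\perp}=A^{\perp}$: for $M\in A^{\perp}$, the full subcategory of objects $X$ with $\mathrm{Hom}_{\mathcal{D}}(X[i],M)=0$ for all $i$ is triangulated and closed under arbitrary direct sums (which become products in the second slot of $\mathrm{Hom}$), and contains $A$, hence contains $\mathrm{Loc}(A)$. In particular $\mathrm{Loc}(A)^{\perp}=0$, and by Bousfield--Neeman localization for compactly generated triangulated categories this forces $\mathrm{Loc}(A)=\mathcal{D}$.

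The second and deeper step is Neeman's theorem that for a set $S$ of compact objects in a compactly generated triangulated category, the compact objects of $\mathrm{Loc}(S)$ coincide with the thick closure $\langle S\rangle$. Applied to $S=A$ this yields $\mathcal{D}^c=\mathrm{Loc}(A)^c=\langle A\rangle$, finishing the proof. I expect this final step to be the main obstacle: it is not a formal consequence of thickness but relies on a Bousfield truncation with respect to $\langle A\rangle$ together with a transfinite argument exploiting that morphisms from a compact object into a filtered homotopy colimit factor through a finite stage. Once this input is granted, the equivalence follows immediately from the preceding two paragraphs.
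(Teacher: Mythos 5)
Your argument is correct. Note that the paper itself offers no proof of this statement: it is quoted verbatim from Bondal--Van den Bergh (\cite{BV}, Theorem 2.1.2), which in turn rests on Neeman's localization theorem. Your outline is essentially the proof from that source: the $(\Leftarrow)$ direction is the formal thick-subcategory argument you give, and the $(\Rightarrow)$ direction correctly isolates the two genuine inputs, namely that $\mathrm{Loc}(A)^{\perp}=0$ forces $\mathrm{Loc}(A)=\mathcal{D}$ via Bousfield--Neeman localization, and that $\mathrm{Loc}(A)^{c}=\langle A\rangle$ for a set of compact generators (the Neeman--Ravenel--Thomason--Trobaugh--Yao theorem). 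Provided these two cited results are taken as known, nothing is missing; you are also right that the second of them is the nontrivial step and cannot be obtained by a purely formal closure argument.
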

We now give the definition of tilting objects (see \cite{BH} for a definition of tilting objects in arbitrary triangulated categories).
\begin{defi}
\textnormal{Let $k$ be a field, $X$ a quasiprojective $k$-scheme and $G$ a finite group acting on $X$. An object $\mathcal{T}^{\bullet}\in D_G(\mathrm{Qcoh}(X))$ is called \emph{tilting object} on $[X/G]$ 
if the following hold:
\begin{itemize}
      \item[\bf (i)] Ext vanishing: $\mathrm{Hom}_G(\mathcal{T}^{\bullet},\mathcal{T}^{\bullet}[i])=0$ for $i\neq0$.
     \item[\bf (ii)] Generation: If $\mathcal{N}^{\bullet}\in D_G(\mathrm{Qcoh}(X))$ satisfies $\mathbb{R}\mathrm{Hom}_G(\mathcal{T}^{\bullet},\mathcal{N}^{\bullet})=0$, then $\mathcal{N}^{\bullet}=0$.
		\item[\bf (iii)] Compactness: $\mathrm{Hom}_G(\mathcal{T}^{\bullet},-)$ commutes with direct sums.
		\end{itemize}}
\end{defi}
Below we state the well-known equivariant tilting correspondence (see \cite{BR}, Theorem 3.1.1). It is a direct application of a more general result on triangulated categories (see \cite{KE}, Theorem 8.5). We denote by $\mathrm{Mod}(A)$ the category of right $A$-modules and by $D^b(A)$ the bounded derived category of finitely generated right $A$-modules. Furthermore, $\mathrm{perf}(A)\subset D(\mathrm{Mod}(A)) $ denotes the full triangulated subcategory of perfect complexes, those quasi-isomorphic to a bounded complexes of finitely generated projective right $A$-modules.
\begin{thm}
Let $X$ be a quasiprojective $k$-scheme and $G$ a finite group acting on $X$. Suppose we are given a tilting object $\mathcal{T}^{\bullet}$ on $[X/G]$ 
and let $A=\mathrm{End}_G(\mathcal{T}^{\bullet})$. Then the following hold:
\begin{itemize}
      \item[\bf (i)] The functor $\mathbb{R}\mathrm{Hom}_G(\mathcal{T}^{\bullet},-)\colon D_G(\mathrm{Qcoh}(X))\rightarrow D(\mathrm{Mod}(A))$ is an equivalence. 
      \item[\bf (ii)] If $X$ is smooth and $\mathcal{T}^{\bullet}\in D^b_G(X)$, this equivalence restricts to an equivalence $D^b_G(X)\stackrel{\sim}\rightarrow \mathrm{perf}(A)$.
			
			\item[\bf (iii)] If the global dimension of $A$ is finite, then $\mathrm{perf}(A)\simeq D^b(A)$. 
\end{itemize} 
\end{thm}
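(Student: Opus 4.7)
The strategy is to reduce the statement to the abstract tilting theorem for compactly generated triangulated categories (Keller, Theorem 8.5), applied in the equivariant setting via the identification $D_G(\mathrm{Qcoh}(X)) \simeq D(\mathrm{Qcoh}([X/G]))$ recorded in Section 2. The three conditions in Definition 3.2 are tailored precisely so that this reduction goes through: Ext vanishing supplies $\mathrm{End}$ with no higher terms, compactness allows us to invoke Theorem 3.1, and generation identifies the ambient category with the derived category of the endomorphism algebra.

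For part (i), I would introduce $F := \mathbb{R}\mathrm{Hom}_G(\mathcal{T}^{\bullet},-)$, which admits a left adjoint $H := - \otimes^{\mathbb{L}}_A \mathcal{T}^{\bullet}$, using that $A = \mathrm{End}_G(\mathcal{T}^{\bullet})$ acts naturally on $\mathcal{T}^{\bullet}$. The key step is to check that the unit $\mathrm{id} \to F \circ H$ and counit $H \circ F \to \mathrm{id}$ are isomorphisms on the distinguished compact generators $A \in D(\mathrm{Mod}(A))$ and $\mathcal{T}^{\bullet} \in D_G(\mathrm{Qcoh}(X))$ respectively. This reduces to the computation $F(\mathcal{T}^{\bullet}) = \mathbb{R}\mathrm{Hom}_G(\mathcal{T}^{\bullet}, \mathcal{T}^{\bullet}) \simeq A$, where condition (i) of the tilting definition ensures concentration in degree zero. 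Compactness (iii) guarantees that both $F$ and $H$ commute with arbitrary direct sums, so the subcategory on which the unit and counit are isomorphisms is triangulated and closed under sums; generation (ii) combined with Theorem 3.1 then forces this subcategory to be the whole of $D_G(\mathrm{Qcoh}(X))$, yielding the equivalence.

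For part (ii), I would use that the equivalence $F$, being triangulated and sum-preserving, restricts to the subcategories of compact objects. By construction the compact objects of $D(\mathrm{Mod}(A))$ are exactly $\mathrm{perf}(A)$. On the other side, smoothness of $X$ together with $\mathcal{T} \in D^b_G(X)$ implies that every object of $D^b_G(X)$ is perfect (locally free resolutions exist by smoothness), so Theorem 3.1 identifies the compact objects in $D_G(\mathrm{Qcoh}(X))$ with $\langle \mathcal{T} \rangle = D^b_G(X)$. Hence the restriction gives $D^b_G(X) \xrightarrow{\sim} \mathrm{perf}(A)$. For part (iii), finite global dimension of $A$ means every finitely generated $A$-module admits a bounded resolution by finitely generated projectives, so each object of $D^b(A)$ is already perfect, giving $D^b(A) \simeq \mathrm{perf}(A)$.

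The main subtlety I anticipate lies in the equivariant bookkeeping, specifically in verifying that the compactness hypothesis $\mathrm{Hom}_G(\mathcal{T}^{\bullet},-)$ commutes with direct sums interacts correctly with the identification $\mathrm{Hom}_G(\cdot,\cdot) \simeq \mathrm{Hom}(\cdot,\cdot)^G$. This relies on the exactness, and in particular the direct-sum preservation, of the invariants functor $(-)^G$ under the standing hypothesis $\mathrm{char}(k) \nmid \mathrm{ord}(G)$, exactly as used in Lemma 2.2. Once this compatibility is in place the argument is essentially formal, since the abstract Keller--Bondal--Van den Bergh machinery carries all remaining content.
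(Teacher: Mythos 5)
Your proposal is correct and follows exactly the route the paper intends: the paper gives no proof of Theorem 3.3 at all, citing it as a direct application of Keller's Theorem 8.5 (via Brav, Theorem 3.1.1), and your argument is precisely the standard unwinding of that citation --- adjunction with $-\otimes^{\mathbb{L}}_A\mathcal{T}^{\bullet}$, checking unit and counit on the compact generators, and using compactness and generation to propagate to the whole category. Nothing further is needed.
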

 \begin{rema}
\textnormal{If $X$ is a smooth projective $k$-scheme and $G=1$, the derived category $D(\mathrm{Qcoh}(X))$ is compactly generated and the compact objects are exactly $D^b(X)$ (see \cite{BV}). In this case, a compact object $\mathcal{T}^{\bullet}$ generates $D(\mathrm{Qcoh}(X))$ if and only if $\langle\mathcal{T}^{\bullet}\rangle=D^b(X)$. Since the natural functor $D^b(X)\rightarrow D(\mathrm{Qcoh}(X))$ is fully faithful (see \cite{HUY}), a compact object $\mathcal{T}^{\bullet}\in D(\mathrm{Qcoh}(X))$ is a tilting object if and only if $\langle\mathcal{T}^{\bullet}\rangle=D^b(X)$ and $\mathrm{Hom}_{D^b(X)}(\mathcal{T}^{\bullet},\mathcal{T}^{\bullet}[i])=0$ for $i\neq 0$. If the tilting object $\mathcal{T}^{\bullet}$ is a coherent sheaf and $\mathrm{gldim}(\mathrm{End}(\mathcal{T}^{\bullet}))<\infty$, the above definition coincides with the definition of a tilting sheaf given in \cite{B}. In this case the tilting object is called \emph{tilting sheaf} on $X$. If it is a locally free sheaf we simply say that $\mathcal{T}$ is a \emph{tilting bundle}. Theorem 3.3 then gives the classical tilting correspondence as first proved by Bondal \cite{BO} and later extended by Baer \cite{B}}.
\end{rema}
The next observation shows that in Theorem 3.3 the smoothness of $X$ already implies the finiteness of the global dimension of $A$.
\begin{prop}
Let $X$, $G$ and $\mathcal{T}^{\bullet}$ be as in Theorem 3.3. If $X$ is smooth and projective, then $A=\mathrm{End}_G(\mathcal{T}^{\bullet})$ has finite global dimension and therefore the equivalence (i) of Theorem 3.3 restricts to an equivalence $D^b_G(X)\stackrel{\sim}\rightarrow D^b(A)$. 
\end{prop}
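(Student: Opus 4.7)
The plan is to establish finiteness of the global dimension of $A$ by transferring Serre duality from the smooth proper stack $[X/G]$ through the tilting equivalence of Theorem~3.3.

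First, since $\mathcal{T}^{\bullet}$ is compact in $D_G(\mathrm{Qcoh}(X))\simeq D(\mathrm{Qcoh}([X/G]))$, and since $[X/G]$ is a smooth proper Deligne--Mumford stack (using smoothness of $X$, finiteness of $G$, and $\mathrm{char}(k)\nmid\mathrm{ord}(G)$), the compact objects of $D_G(\mathrm{Qcoh}(X))$ coincide with $D^b_G(X)$ by the equivariant extension of the result in \cite{BV} recalled in Remark~3.4. Thus $\mathcal{T}^{\bullet}\in D^b_G(X)$, and Lemma~2.2 applied to $\mathcal{F}^{\bullet}=\mathcal{G}^{\bullet}=\mathcal{T}^{\bullet}$ shows that $A=\mathrm{End}_G(\mathcal{T}^{\bullet})$ is finite-dimensional over $k$.

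Second, I would transfer Serre duality to the equivariant setting. Classical Serre duality on $X$ reads $\mathrm{Hom}(\mathcal{F},\mathcal{G})\simeq\mathrm{Hom}(\mathcal{G},\mathcal{F}\otimes\omega_X[\dim X])^*$ for $\mathcal{F},\mathcal{G}\in D^b(X)$. Applying the exact functor $(-)^G$ together with Lemma~2.2 on both sides shows that $\mathbb{S}:=-\otimes\omega_X[\dim X]$, equipped with its canonical $G$-linearization, is a Serre functor on $D^b_G(X)$. By Theorem~3.3(ii) the tilting equivalence restricts to $D^b_G(X)\xrightarrow{\sim}\mathrm{perf}(A)$, so $\mathrm{perf}(A)$ inherits a Serre functor $\mathbb{S}_A$ by transport.

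Applying $\mathbb{S}_A$ to the free module $A\in\mathrm{perf}(A)$ and combining the Serre duality identity $\mathrm{Hom}_A(A,A[i])\simeq\mathrm{Hom}_A(A,\mathbb{S}_A(A)[-i])^*$ with the canonical $\mathrm{Hom}_A(A,M[i])=H^i(M)$ forces $\mathbb{S}_A(A)\simeq DA=\mathrm{Hom}_k(A,k)$ concentrated in degree zero. Hence $DA\in\mathrm{perf}(A)$. For a finite-dimensional $k$-algebra, perfection of $DA$ is equivalent to $A$ having finite global dimension: $DA$ is an injective cogenerator of $\mathrm{mod}(A)$, so its finite projective dimension propagates via syzygies to every finitely generated $A$-module. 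Combined with Theorem~3.3(iii), this produces the refined equivalence $D^b_G(X)\simeq D^b(A)$. The main obstacle lies in this last algebraic implication; by contrast, the equivariant Serre duality and the identification of compact objects with $D^b_G(X)$ are routine extensions of the non-equivariant statements made possible by the exactness of $(-)^G$.
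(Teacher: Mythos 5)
Your reduction to the claim that ``$DA\in\mathrm{perf}(A)$ is equivalent to $A$ having finite global dimension'' is where the proof breaks down: the implication you need is false. For a finite-dimensional algebra, $DA\in\mathrm{perf}(A)$ is only one half of the Gorenstein condition, and Gorenstein algebras can have infinite global dimension. The standard counterexample is $A=k[x]/(x^2)$: it is self-injective, so $DA\simeq A$ is projective and in particular perfect, yet $\mathrm{gldim}(A)=\infty$. Your proposed justification --- propagating finite projective dimension from the injective cogenerator $DA$ ``via syzygies'' --- cannot work, because embedding $M$ into a finite sum of copies of $DA$ produces cosyzygies, and the estimate $\mathrm{pd}(M)\leq\max(\mathrm{pd}(I),\mathrm{pd}(C)+1)$ coming from $0\to M\to I\to C\to 0$ never terminates. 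More conceptually, everything you use (equivariant Serre duality, its transport, the computation $\mathbb{S}_A(A)\simeq DA$) takes place inside $\mathrm{perf}(A)\simeq D^b_G(X)$, and no information internal to $\mathrm{perf}(A)$ can detect infinite global dimension: the obstruction is precisely that the simple $A$-modules may fail to lie in $\mathrm{perf}(A)$. (The earlier steps are fine: the identification of compact objects with $D^b_G(X)$, the finite-dimensionality of $A$, and the equivariant Serre functor obtained from exactness of $(-)^G$ and $(V^*)^G\simeq (V^G)^*$ all go through.)

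The paper's proof avoids this trap by leaving $\mathrm{perf}(A)$: writing $\psi$ for the tilting equivalence, it computes, for arbitrary finitely generated right $A$-modules $M$ and $N$,
\[
\mathrm{Ext}^i_A(M,N)\simeq \mathrm{Hom}_G(\psi^{-1}(M),\psi^{-1}(N)[i])\simeq \mathrm{Hom}(\psi^{-1}(M),\psi^{-1}(N)[i])^G,
\]
and kills the right-hand side for $i\gg 0$ using smoothness of $X$ (local-to-global spectral sequence plus Grothendieck vanishing) together with Lemma 2.2. Since $X$ is projective, $A$ is a finite-dimensional, hence noetherian, $k$-algebra, and the vanishing of $\mathrm{Ext}^i_A(M,N)$ for $i\gg 0$ for all pairs of finitely generated modules then forces $\mathrm{gldim}(A)<\infty$. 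If you want to salvage a Serre-duality route, you would still need an argument that reaches modules not known in advance to be perfect; that is exactly what the geometric boundedness of Ext on $X$ supplies and what your argument is missing.
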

\begin{proof}
Imitating the proof of Theorem 7.6 in \cite{HV}, we argue as follows: For two finitely generated right $A$-modules $M$ and $N$, the equivalence $\psi:=\mathbb{R}\mathrm{Hom}_G(\mathcal{T}^{\bullet},-)\colon D^b_G(X)\rightarrow \mathrm{perf}(A)$ (see Theorem 3.3 (ii)) yields
\begin{center}
$\mathrm{Ext}^i_A(M,N)\simeq \mathrm{Hom}_G(\psi^{-1}(M),\psi^{-1}(N)[i])\simeq \mathrm{Hom}(\psi^{-1}(M),\psi^{-1}(N)[i])^G=0$ 
\end{center}
for $i\gg 0$, since $X$ is smooth. Indeed, this follows from the local-to-global spectral sequence, Grothendieck vanishing Theorem and Lemma 2.2. As $X$ is projective, $A=\mathrm{End}_G(\mathcal{T}^{\bullet})$ is a finite-dimensional $k$-algebra and hence a noetherian ring. But for noetherian rings the vanishing of $\mathrm{Ext}^i_A(M,N)$ for $i\gg 0$ for any two finitely generated $A$-modules $M$ and $N$ suffices to conclude that the global dimension of $A$ has to be finite.   
\end{proof}
In the literature, instead of the tilting object $\mathcal{T}^{\bullet}$ one often studies the set $\mathcal{E}^{\bullet}_1,...,\mathcal{E}^{\bullet}_n$ of its indecomposable pairwise non-isomorphic direct summands. There is a special case where all the summands form a so-called full strongly exceptional collection. Closely related to the notion of a full strongly exceptional collection is that of a semiorthogonal decomposition. We recall the definitions and follow here \cite{O2}.
\begin{defi}
\textnormal{Let $X$ and $G$ be as in Definition 3.2. An object $\mathcal{E}^{\bullet}\in D^b_G(X)$ is called \emph{exceptional} if $\mathrm{Hom}_G(\mathcal{E}^{\bullet},\mathcal{E}^{\bullet}[l])=0$ when $l\neq 0$, and $\mathrm{Hom}_G(\mathcal{E}^{\bullet},\mathcal{E}^{\bullet})=k$. An \emph{exceptional collection} in $D^b_G(X)$ is a sequence of exceptional objects $\mathcal{E}^{\bullet}_1,...,\mathcal{E}^{\bullet}_n$ satisfying $\mathrm{Hom}_G(\mathcal{E}^{\bullet}_i,\mathcal{E}^{\bullet}_j[l])=0$ for all $l\in \mathbb{Z}$ if $i>j$.}

\textnormal{The exceptional collection is called \emph{strongly exceptional} if in addition $\mathrm{Hom}_G(\mathcal{E}^{\bullet}_i,\mathcal{E}^{\bullet}_j[l])=0$ for all $i$ and $j$ when $l\neq 0$. Finally, we say the exceptional collection is \emph{full} if the smallest full thick subcategory containing all $\mathcal{E}^{\bullet}_i$ equals $D^b_G(X)$.}
\end{defi}
A generalization is the notion of a semiorthogonal decomposition of $D_G^b(X)$.
Recall, a full triangulated subcategory $\mathcal{D}$ of $D^b_G(X)$ is called \emph{admissible} if the inclusion $\mathcal{D}\hookrightarrow D^b_G(X)$ has a left and right adjoint functor.
\begin{defi}
\textnormal{Let $X$ and $G$ be as in Definition 3.2. A sequence $\mathcal{D}_1,...,\mathcal{D}_n$ of full triangulated subcategories of $D^b_G(X)$ is called \emph{semiorthogonal} if all $\mathcal{D}_i\subset D^b_G(X)$ are admissible and $\mathcal{D}_j\subset \mathcal{D}_i^{\perp}=\{\mathcal{F}^{\bullet}\in D_G^b(X)\mid \mathrm{Hom}_G(\mathcal{G}^{\bullet},\mathcal{F}^{\bullet})=0$, $\forall$ $ \mathcal{G}^{\bullet}\in\mathcal{D}_i\}$ for $i>j$.}

\textnormal{Such a sequence defines a \emph{semiorthogonal decomposition} of $D^b_G(X)$ if the smallest full thick subcategory containing all $\mathcal{D}_i$ equals $D^b_G(X)$.}
\end{defi}
For a semiorthogonal decomposition of $D_G^b(X)$, we write $D_G^b(X)=\langle\mathcal{D}_1,...,\mathcal{D}_r\rangle$.
\begin{exam}
\textnormal{It is an easy exercise to show that a full exceptional collection $\mathcal{E}^{\bullet}_1,...,\mathcal{E}^{\bullet}_n$ in $D_G^b(X)$ gives rise to a semiorthogonal decomposition $D_G^b(X)=\langle\mathcal{D}_1,...,\mathcal{D}_n\rangle$, where $\mathcal{D}_i=\langle \mathcal{E}^{\bullet}_i\rangle$ (see \cite{HUY}, Example 1.60).} 
\end{exam} 
The direct sum of the exceptional objects in a full strongly exceptional collection is a tilting object but the pairwise non-isomorphic indecomposable direct summands of a tilting object in general cannot be arranged into a full strongly exceptional collection. However, if the pairwise non-isomorphic indecomposable direct summands are invertible sheaves, they give rise to a full strongly exceptional collection. Exceptional collections and semiorthogonal decompositions were intensively studied and we know quite a lot of examples of schemes admitting full exceptional collections or semiorthogonal decompositions. For an overview we refer to \cite{BOO} and \cite{KU}.\\
\section{Tilting objects on $[X/G]$}
Let $G$ be a finite group acting on a smooth projective $k$-scheme $X$. One has the $G$-morphism $f:X\rightarrow \mathrm{Spec}(k)$, with $G$ acting trivially on $\mathrm{Spec}(k)$. For a representation $W$ of $G$, the sheaf $f^{*}W=\mathcal{O}_X\otimes W$ has a natural equivariant structure. We shortly write $W$ for $f^{*}W=\mathcal{O}_X\otimes W$ as an object of $\mathrm{Coh}_G(X)$. With this notation we prove the following theorem.
\begin{thm}
Let $X$ be a smooth projective $k$-scheme and $G$ a finite group acting on $X$. Suppose there is a $\mathcal{T}^{\bullet}\in D_G(\mathrm{Qcoh}(X))$ which, considered as an object in $D(\mathrm{Qcoh}(X))$, is a tilting object on $X$. Denote by $W_j$ the irreducible representations of $G$, then $\bigoplus_j\mathcal{T}^{\bullet}\otimes W_j$ is a tilting object on $[X/G]$.
\end{thm}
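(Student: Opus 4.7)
The plan is to verify the three defining conditions for a tilting object on $[X/G]$ (Definition 3.2) for $\mathcal{T}_G := \bigoplus_j\mathcal{T}^{\bullet}\otimes W_j$, translating each one via the identity $\mathrm{Hom}_G(-,-)\cong\mathrm{Hom}(-,-)^G$ (the version for $D_G(\mathrm{Qcoh}(X))$ underlying Lemma 2.2, whose proof uses nothing more than the exactness of $(-)^G$) into a question about ordinary Hom in $D(\mathrm{Qcoh}(X))$ combined with a bit of representation theory of $G$. The decisive input is the $G$-equivariant isomorphism
$$\mathrm{Hom}\bigl(\mathcal{T}^{\bullet}\otimes W_i,\mathcal{N}^{\bullet}\otimes W_j[l]\bigr)\;\cong\;\mathrm{Hom}(\mathcal{T}^{\bullet},\mathcal{N}^{\bullet}[l])\otimes_k\bigl(W_i^{\vee}\otimes W_j\bigr),$$
valid because each $W_j$, viewed on $X$ as $\mathcal{O}_X\otimes W_j$, is a free sheaf of finite rank endowed with the diagonal $G$-action.

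For Ext-vanishing, specialising $\mathcal{N}^{\bullet}=\mathcal{T}^{\bullet}$ and using that $\mathcal{T}^{\bullet}$ is tilting on $X$, the right-hand side vanishes for $l\neq 0$, hence so does $\mathrm{Hom}_G(\mathcal{T}_G,\mathcal{T}_G[l])$ after summing over $i,j$ and taking $G$-invariants. For compactness, one reduces to compactness of each summand, and $\mathrm{Hom}_G(\mathcal{T}^{\bullet}\otimes W_j,-)\cong(\mathrm{Hom}(\mathcal{T}^{\bullet},-)\otimes W_j^{\vee})^G$ commutes with arbitrary direct sums since $\mathcal{T}^{\bullet}$ is compact in $D(\mathrm{Qcoh}(X))$, tensoring with the finite-dimensional $W_j^{\vee}$ is exact, and $(-)^G$ commutes with direct sums (being exact under the characteristic assumption and a $\mathrm{Hom}$ out of the trivial representation of the finite group $G$).

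The main step, and the main obstacle, is generation. Assume $\mathcal{N}^{\bullet}\in D_G(\mathrm{Qcoh}(X))$ satisfies $\mathbb{R}\mathrm{Hom}_G(\mathcal{T}_G,\mathcal{N}^{\bullet})=0$. For every irreducible $W_j$ and every $l\in\mathbb{Z}$ this yields
$$0\;=\;\mathrm{Hom}_G(\mathcal{T}^{\bullet}\otimes W_j,\mathcal{N}^{\bullet}[l])\;\cong\;\bigl(\mathrm{Hom}(\mathcal{T}^{\bullet},\mathcal{N}^{\bullet}[l])\otimes W_j^{\vee}\bigr)^G\;\cong\;\mathrm{Hom}_G\bigl(W_j,\mathrm{Hom}(\mathcal{T}^{\bullet},\mathcal{N}^{\bullet}[l])\bigr).$$
Writing $V_l:=\mathrm{Hom}(\mathcal{T}^{\bullet},\mathcal{N}^{\bullet}[l])$, a priori only a $kG$-module possibly of infinite dimension, I would argue as follows: if $V_l\neq 0$, any nonzero $v\in V_l$ generates a finite-dimensional submodule $kG\cdot v\subset V_l$, which by Maschke's theorem (here the hypothesis $\mathrm{char}(k)\nmid\mathrm{ord}(G)$ is essential) splits into irreducibles, yielding an embedding $W_j\hookrightarrow V_l$ for some $j$ and hence $\mathrm{Hom}_G(W_j,V_l)\neq 0$, a contradiction. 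Therefore $V_l=0$ for all $l$, i.e.\ $\mathbb{R}\mathrm{Hom}(\mathcal{T}^{\bullet},\mathcal{N}^{\bullet})=0$ in $D(\mathrm{Qcoh}(X))$, and the tilting property of $\mathcal{T}^{\bullet}$ on $X$ forces $\mathcal{N}^{\bullet}=0$.

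The delicate point is precisely this reduction from ``$V_l$ has no irreducible submodule'' to ``$V_l=0$'' for a possibly infinite-dimensional representation; it is what makes the recurring characteristic hypothesis indispensable, and it is the only non-formal ingredient beyond rewriting everything equivariantly via $(-)^G$.
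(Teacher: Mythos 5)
Your proposal is correct and follows essentially the same route as the paper: the same key isomorphism $\mathrm{Hom}(\mathcal{T}^{\bullet}\otimes W_i,\mathcal{T}^{\bullet}\otimes W_j[l])\cong\mathrm{Hom}(\mathcal{T}^{\bullet},\mathcal{T}^{\bullet}[l])\otimes\mathrm{Hom}(W_i,W_j)$ combined with Lemma 2.2 for Ext-vanishing, and the same reduction of generation to the statement that $\mathrm{Hom}(\mathcal{T}^{\bullet},\mathcal{N}^{\bullet}[l])$ receives no irreducible representation and hence vanishes. Your Maschke-theorem argument merely makes explicit a step the paper leaves implicit, so there is nothing to add.
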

\begin{proof}
As $\mathcal{T}^{\bullet}$ is a tilting object on $X$, it is compact by definition and hence $\mathcal{T}^{\bullet}\in D^b(X)$ (see \cite{BV}). Let $\mu_j$ be the equivariant structure on $W_j$, then $\mathcal{T}^{\bullet}\otimes W_j$ is equipped with a natural equivariant structure, say $\lambda$. Taking the direct sum gives a natural equivariant structure $\tilde{\lambda}$ on $\mathcal{T}_G:=\bigoplus_j\mathcal{T}^{\bullet}\otimes W_j$ and hence $\mathcal{T}_G\in D^b_G(X)$. So $\mathcal{T}_G$ is a compact object of $D_G(\mathrm{Qcoh}(X))$. Recall that $D^b_G(X)\simeq D^b([X/G])$. For every $i\in \mathbb{Z}$ one has canonical isomorphisms on $X$ 
\begin{eqnarray}
\mathrm{Hom}(\mathcal{T}^{\bullet}\otimes W_l,\mathcal{T}^{\bullet}\otimes W_m[i])\simeq \mathrm{Hom}(\mathcal{T}^{\bullet},\mathcal{T}^{\bullet}[i])\otimes \mathrm{Hom}(W_l,W_m).
\end{eqnarray}  
As $G$ is finite, Lemma 2.2 applies and we get with (1): 
\begin{center}
$\mathrm{Hom}_G(\mathcal{T}^{\bullet}\otimes W_l,\mathcal{T}^{\bullet}\otimes W_m[i])\simeq (\mathrm{Hom}(\mathcal{T}^{\bullet},\mathcal{T}^{\bullet}[i])\otimes \mathrm{Hom}(W_l,W_m))^G$. 
\end{center} Since $\mathcal{T}^{\bullet}$ is a tilting object for $X$, we have $\mathrm{Hom}(\mathcal{T}^{\bullet},\mathcal{T}^{\bullet}[i])=0$ for $i\neq 0$ and therefore $\mathrm{Hom}_G(\mathcal{T}^{\bullet}\otimes W_l,\mathcal{T}^{\bullet}\otimes W_m[i])=0$ for $i\neq 0$.
This implies $\mathrm{Hom}_G(\mathcal{T}_G,\mathcal{T}_G[i])=0$ for $i\neq 0$ and hence the Ext vanishing holds true. 

To see that $\mathcal{T}_G$ generates $D_G(\mathrm{Qcoh}(X))$, we take an object  
$\mathcal{F}^{\bullet}\in D_G(\mathrm{Qcoh}(X))$ and assume $\mathbb{R}\mathrm{Hom}_G(\mathcal{T}_G,\mathcal{F}^{\bullet})=0$, i.e. $\mathrm{Hom}_G(\mathcal{T}_G,\mathcal{F}^{\bullet}[i])=0$ for all $i\in\mathbb{Z}$.

Since $\mathcal{T}_G=\bigoplus_j\mathcal{T}^{\bullet}\otimes W_j$, we have $\mathrm{Hom}_G(\mathcal{T}^{\bullet}\otimes W_j,\mathcal{F}^{\bullet}[i])=0$ for all $i\in\mathbb{Z}$ and all irreducible representations $W_j$. From 
\begin{center}
$\mathrm{Hom}_G(\mathcal{T}^{\bullet}\otimes W_j,\mathcal{F}^{\bullet}[i])\simeq \mathrm{Hom}_G(W_j,\mathbb{R}\mathrm{Hom}(\mathcal{T}^{\bullet},\mathcal{F}^{\bullet}[i]))=0$
\end{center} 
we conclude that $\mathbb{R}\mathrm{Hom}(\mathcal{T}^{\bullet},\mathcal{F}^{\bullet}[i])$ contains no copy of any irreducible representation $W_m$ and so must be zero. Since $\mathcal{T}^{\bullet}$ is a tilting object on $X$ and therefore generates $D(\mathrm{Qcoh}(X))$, we find $\mathcal{F}^{\bullet}=0$. This shows that $\mathcal{T}_G$ generates $D_G(\mathrm{Qcoh}(X))$. 
\end{proof}
If $\mathrm{char}(k) \nmid \mathrm{ord}(G)$, the regular representation $k[G]$ is the direct sum of multiple copies of the irreducible representations of $G$. More precise, as a $G$-representation $k[G]=\bigoplus_i W^{\oplus \mathrm{dim}(W_i)}_i$, where $W_i$ are the irreducible representations. This follows from the Artin--Wedderburn Theorem as $k[G]$ is semi-simple.
\begin{thm}
Let $X$ be a smooth projective $k$-scheme and $G$ a finite group acting on $X$. Suppose there is a $\mathcal{T}^{\bullet}\in D_G(\mathrm{Qcoh}(X))$ which, considered as an object in $D(\mathrm{Qcoh}(X))$, is a tilting object on $X$. Then $\mathcal{T}^{\bullet}\otimes k[G]$ is a tilting object on $[X/G]$.
\end{thm}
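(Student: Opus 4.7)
The plan is to derive Theorem 4.2 as an immediate corollary of Theorem 4.1 combined with the Artin--Wedderburn decomposition of $k[G]$ that is recalled in the paragraph preceding the statement.

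First I would record that, since $\mathrm{char}(k)\nmid\mathrm{ord}(G)$, the group algebra decomposes as a $G$-representation in the form $k[G]\simeq\bigoplus_i W_i^{\oplus\dim(W_i)}$, where the $W_i$ run over the irreducible representations of $G$. Tensoring with $\mathcal{T}^{\bullet}$ in $\mathrm{Coh}_G(X)$ (using the canonical equivariant structure on tensor products) commutes with direct sums, so
\begin{equation*}
\mathcal{T}^{\bullet}\otimes k[G]\;\simeq\;\bigoplus_i\bigl(\mathcal{T}^{\bullet}\otimes W_i\bigr)^{\oplus\dim(W_i)}.
\end{equation*}
Thus $\mathcal{T}^{\bullet}\otimes k[G]$ has precisely the same indecomposable equivariant direct summands as the tilting object $\mathcal{T}_G=\bigoplus_j\mathcal{T}^{\bullet}\otimes W_j$ produced by Theorem~4.1, only with positive multiplicities.

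Next I would verify the three conditions of Definition~3.2 for $\mathcal{T}^{\bullet}\otimes k[G]$ by transferring them from $\mathcal{T}_G$. Ext vanishing is immediate: $\mathrm{Hom}_G(-,-[i])$ distributes over finite direct sums in each variable, so the vanishing $\mathrm{Hom}_G(\mathcal{T}_G,\mathcal{T}_G[i])=0$ for $i\neq 0$ established in Theorem~4.1 implies the same for $\mathcal{T}^{\bullet}\otimes k[G]$. Compactness is preserved by finite direct sums, hence is inherited from $\mathcal{T}_G$. For generation, note that each summand $\mathcal{T}^{\bullet}\otimes W_i$ is a direct summand of $\mathcal{T}^{\bullet}\otimes k[G]$ (because $\dim(W_i)\geq 1$), so if $\mathcal{N}^{\bullet}\in D_G(\mathrm{Qcoh}(X))$ satisfies $\mathbb{R}\mathrm{Hom}_G(\mathcal{T}^{\bullet}\otimes k[G],\mathcal{N}^{\bullet})=0$, then $\mathbb{R}\mathrm{Hom}_G(\mathcal{T}^{\bullet}\otimes W_i,\mathcal{N}^{\bullet})=0$ for every $i$, and consequently $\mathbb{R}\mathrm{Hom}_G(\mathcal{T}_G,\mathcal{N}^{\bullet})=0$; by Theorem~4.1 this forces $\mathcal{N}^{\bullet}=0$.

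Since each of the three defining properties of a tilting object is preserved under adding extra copies of existing direct summands, there is no genuine obstacle here; the content of the argument lies entirely in Theorem~4.1, and the present statement is a convenient repackaging chosen so that the endomorphism algebra $\mathrm{End}_G(\mathcal{T}^{\bullet}\otimes k[G])$ has a clean description in terms of $\mathrm{End}(\mathcal{T}^{\bullet})$ and $k[G]$.
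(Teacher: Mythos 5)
Your proposal is correct and matches the paper's own (very brief) proof, which likewise invokes the decomposition $k[G]\simeq\bigoplus_i W_i^{\oplus\dim(W_i)}$ and reduces everything to Theorem~4.1; you merely deduce the three tilting properties formally from the statement of Theorem~4.1 rather than "repeating its proof," which is a harmless and if anything cleaner phrasing of the same argument.
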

\begin{proof}
Repeating the proof of Theorem 4.1 one verifies that $\mathcal{T}^{\bullet}\otimes k[G]=\bigoplus_i \mathcal{T}^{\bullet}\otimes W^{\oplus \mathrm{dim}(W_i)}_i\in D^b_G(X)$ generates $D_G(\mathrm{Qcoh}(X))$ and has no higher self extensions.
\end{proof}
Note that different equivariant structures on the tilting object $\mathcal{T}^{\bullet}\in D^b(X)$ give rise to different tilting objects on $[X/G]$.
\begin{exam}
\textnormal{We know that $\mathcal{T}=\bigoplus^n_{i=0}\mathcal{O}_{\mathbb{P}^n}(i)$ is a tilting bundle on $\mathbb{P}^n$ [5]. Consider a finite subgroup $G\subset\mathrm{Aut}_k(\mathbb{P}^n)\simeq \mathrm{PGL}_{n+1}(k)$ and the stack $[\mathbb{P}^n/G]$. Assume that $G$ acts linearly on $\mathbb{P}^n$, i.e the action lifts to an action of $GL_{n+1}(k)$. Then any invertible sheaf on $\mathbb{P}^n$ admits an equivariant structure. So by choosing such on each $\mathcal{O}_{\mathbb{P}^n}(i)$, $\mathcal{T}\in D^b_G(\mathbb{P}^n)$. Now Theorem 4.1 gives a tilting bundle on $[\mathbb{P}^n/G]$ (see also \cite{BR}, Theorem 3.2.1)}. 
\end{exam}
\begin{exam}
\textnormal{Let $\mathrm{Grass}_k(d,n)$ be the Grassmannian over an algebraically closed field $k$ of characteristic zero. For $2d\neq n$ one has $\mathrm{Aut}_k(\mathrm{Grass}_k(d,n))=\mathrm{PGL}_n(k)$. Let $G\subset \mathrm{PGL}_n(k)$ be a finite group acting linearly on $\mathrm{Grass}_k(d,n)$, i.e. the action lifts to an action of $\mathrm{GL}_n(k)$. Then the tautological sheaf $\mathcal{S}$ of $\mathrm{Grass}_k(d,n)$ and, due to functoriality, the Schur modules $\Sigma^{\lambda}(\mathcal{S})$ admit a natural equivariant structure. So $\bigoplus_{\lambda}\Sigma^{\lambda}(\mathcal{S})\in D^b_G(\mathrm{Grass}_k(d,n))$. As $\bigoplus_{\lambda}\Sigma^{\lambda}(\mathcal{S})$ is a tilting bundle on $\mathrm{Grass}_k(d,n)$ (see \cite{KA}), Theorem 4.1 gives a tilting bundle on $[\mathrm{Grass}_k(d,n)/G]$}.
\end{exam}
Let $X$ be a smooth projective $k$-scheme and $G$ a finite group acting on $X$. For a field extension $k\subset E$ we set $X_E:=X\otimes_k E$ and $G_E:=G\otimes_k E$. Since $G$ acts on $X$, the group $G_E$ acts on $X_E$. Suppose there is an object $\mathcal{T}^{\bullet}\in D^b_G(X)$ such that $\mathcal{T}^{\bullet}\otimes_k E$ is a tilting object on $X_E$. Below we prove that in fact $\mathcal{T}^{\bullet}$ is a tilting object on $[X/G]$. We first need the following lemma, essentially proved in \cite{BLU}. For convenience of the reader we give a proof. 
\begin{lem}
Let $X$ be a smooth projective $k$-scheme and $k\subset E$ a field extension. For a given object $\mathcal{T}^{\bullet}\in D^b(X)$, suppose that $\mathcal{T}^{\bullet}\otimes_k E$ is a tilting object on $X\otimes_k E$. Then $\mathcal{T}^{\bullet}$ is a tilting object on $X$.
\end{lem}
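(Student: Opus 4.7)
The plan is to verify the three defining properties of a tilting object for $\mathcal{T}^{\bullet}$ on $X$ (Definition 3.2, in the case $G=1$): compactness, Ext vanishing, and generation. The guiding principle throughout is that the field extension $k\subset E$ is faithfully flat, and that for a perfect complex on a smooth projective scheme, both $\mathrm{Hom}$-spaces and $\mathbb{R}\mathrm{Hom}$-complexes commute with flat base change.

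For compactness, since $X$ is smooth and projective, $D^b(X)$ coincides with the category of perfect complexes and every such object is a compact generator candidate, so $\mathrm{Hom}(\mathcal{T}^{\bullet},-)$ automatically commutes with arbitrary direct sums; this gives property (iii) for free. For the Ext vanishing (i), I would use the flat base change isomorphism
\begin{equation*}
\mathrm{Hom}_{D(X)}(\mathcal{T}^{\bullet},\mathcal{T}^{\bullet}[i])\otimes_k E\;\simeq\;\mathrm{Hom}_{D(X_E)}(\mathcal{T}^{\bullet}\otimes_k E,\mathcal{T}^{\bullet}\otimes_k E[i]),
\end{equation*}
valid because $\mathcal{T}^{\bullet}$ is perfect and $k\to E$ is flat. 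The right-hand side vanishes for $i\neq0$ by hypothesis, and faithful flatness of $k\to E$ forces the left-hand side to vanish as well.

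For the generation property (ii), suppose $\mathcal{N}^{\bullet}\in D(\mathrm{Qcoh}(X))$ satisfies $\mathbb{R}\mathrm{Hom}(\mathcal{T}^{\bullet},\mathcal{N}^{\bullet})=0$. Denote by $p\colon X_E\to X$ the projection, which is flat and affine. Then, again using that $\mathcal{T}^{\bullet}$ is perfect,
\begin{equation*}
\mathbb{R}\mathrm{Hom}_{X_E}(p^*\mathcal{T}^{\bullet},p^*\mathcal{N}^{\bullet})\;\simeq\;\mathbb{R}\mathrm{Hom}_X(\mathcal{T}^{\bullet},\mathcal{N}^{\bullet})\otimes_k E\;=\;0.
\end{equation*}
By the generation property of $\mathcal{T}^{\bullet}\otimes_k E$ on $X_E$, this forces $p^*\mathcal{N}^{\bullet}=\mathcal{N}^{\bullet}\otimes_k E=0$. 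Faithful flatness of $k\to E$ then yields $\mathcal{N}^{\bullet}=0$, establishing generation.

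The main obstacle, such as it is, lies in the base change step for $\mathbb{R}\mathrm{Hom}$ with an unbounded quasi-coherent second argument $\mathcal{N}^{\bullet}$: one needs perfectness of $\mathcal{T}^{\bullet}$ to move the flat pullback past $\mathbb{R}\mathcal{H}om$ and to commute $\mathbb{R}\Gamma$ with $(-)\otimes_k E$. Both are available here because $X$ is smooth projective, so $\mathcal{T}^{\bullet}\in D^b(X)$ is automatically perfect (as recalled in Remark 3.4), and $p$ is flat with affine fibres. Once this base change is granted, the three verifications are essentially formal consequences of the faithful flatness of the field extension.
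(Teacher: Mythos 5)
Your proof is correct and follows essentially the same route as the paper: both arguments reduce to the flat base change isomorphism $\mathbb{R}\mathrm{Hom}(\mathcal{T}^{\bullet},\mathcal{F}^{\bullet})\otimes_k E\simeq \mathbb{R}\mathrm{Hom}(v^*\mathcal{T}^{\bullet},v^*\mathcal{F}^{\bullet})$ (using that $\mathcal{T}^{\bullet}$ is perfect, i.e.\ a bounded complex of locally free sheaves, on the smooth projective $X$), and then descend the Ext vanishing and the generation property via the faithful flatness of $k\subset E$. Your explicit remark on compactness is a harmless addition that the paper leaves implicit.
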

\begin{proof}
Let $v:X\otimes_k E\rightarrow X$ be the projection. By assumption $v^*\mathcal{T}^{\bullet}=\mathcal{T}^{\bullet}\otimes_k E$ is a tilting object on $X\otimes_k E$. 
We calculate $\mathrm{Hom}(\mathcal{T}^{\bullet},\mathcal{T}^{\bullet}[i])$. For this, we consider the following base change diagram
\begin{displaymath}
\begin{xy}
  \xymatrix{
      X\otimes_k E\ar[r]^{v} \ar[d]_{q}    &   X \ar[d]^{p}                   \\
      \mathrm{Spec}(E) \ar[r]^{u}             &   \mathrm{Spec}(k)             
  }
\end{xy}
\end{displaymath}
Let $\mathcal{E}^{\bullet}$ be a bounded complex of locally free sheaves and $\mathcal{F}^{\bullet}\in D(\mathrm{Qcoh}(X))$ arbitrary. Then flat base change (see \cite{HUY}, p.85 (3.18)) yields isomorphisms 
\begin{eqnarray*}
u^*(\mathbb{R}\mathrm{Hom}(\mathcal{E}^{\bullet},\mathcal{F}^{\bullet}))&\simeq &u^*\mathbb{R}p_*\mathbb{R}\mathcal{H}om(\mathcal{E}^{\bullet},\mathcal{F}^{\bullet})\\
 &\simeq &\mathbb{R}q_*v^*\mathbb{R}\mathcal{H}om(\mathcal{E}^{\bullet},\mathcal{F}^{\bullet})\\
&\simeq &\mathbb{R}q_*v^*({\mathcal{E}^{\bullet}}^{\vee}\otimes\mathcal{F}^{\bullet})\\
&\simeq &\mathbb{R}q^*\mathbb{R}\mathcal{H}om(v^*\mathcal{E}^{\bullet},v^*\mathcal{F}^{\bullet})\\
&\simeq &\mathbb{R}\mathrm{Hom}(v^*\mathcal{E}^{\bullet},v^*\mathcal{F}^{\bullet}).
\end{eqnarray*}
This implies 
\begin{center}
$\mathrm{Hom}(v^*\mathcal{T}^{\bullet},v^*\mathcal{T}^{\bullet}[i])\simeq \mathrm{Hom}(\mathcal{T}^{\bullet},\mathcal{T}^{\bullet}[i])\otimes_k E=0$
for $i\neq 0$, 
\end{center}
since $v^*\mathcal{T}^{\bullet}$ is a tilting object on $X\otimes_k E$. Hence $\mathrm{Hom}(\mathcal{T}^{\bullet},\mathcal{T}^{\bullet}[i])=0$ for $i\neq 0$ and therefore Ext vanishing holds. 

For the generation property of $\mathcal{T}^{\bullet}$, we take an object $\mathcal{F}^{\bullet}\in D(\mathrm{Qcoh}(X))$ and assume $\mathbb{R}\mathrm{Hom}(\mathcal{T}^{\bullet},\mathcal{F}^{\bullet})=0$. The above isomorphisms obtained from flat base change yield
\begin{eqnarray*}
0=u^*(\mathbb{R}\mathrm{Hom}(\mathcal{T}^{\bullet},\mathcal{F}^{\bullet}))\simeq \mathbb{R}\mathrm{Hom}(v^*\mathcal{T}^{\bullet},v^*\mathcal{F}^{\bullet}).
\end{eqnarray*}
Since $v^*\mathcal{T}^{\bullet}$ is a tilting object on $X\otimes_k E$, we have $v^*\mathcal{F}^{\bullet}=0$. As $v$ is a faithfully flat morphism, $\mathcal{F}^{\bullet}=0$ and hence $\mathcal{T}^{\bullet}$ generates $D(\mathrm{Qcoh}(X))$. Finally, since $X$ is smooth, the global dimension of $\mathrm{End}(\mathcal{T}^{\bullet})$ is finite. This completes the proof.
\end{proof}
\begin{prop}
Let $X$ be a smooth projective $k$-scheme, $\mathcal{T}^{\bullet} \in D^b_G(X)$ and $k\subset E$ a field extension. Considering $\mathcal{T}^{\bullet}$ as an object in $D^b(X)$, suppose $\mathcal{T}^{\bullet}\otimes_k E$ is a tilting object on $X_E$. Then $\mathcal{T}^{\bullet}\otimes k[G]$ is a tilting object on $[X/G]$. 
\end{prop}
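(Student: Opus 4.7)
The plan is to chain together Lemma 4.6 and Theorem 4.2, which together do essentially all of the work. The proposition is stated in a way that suggests exactly this two-step strategy: first descend the tilting property from $X_E$ to $X$, then lift from $X$ to $[X/G]$.

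First I would apply Lemma 4.6 to the underlying object $\mathcal{T}^{\bullet}\in D^b(X)$ obtained by forgetting the equivariant structure. The hypothesis of the proposition is precisely that $\mathcal{T}^{\bullet}\otimes_k E$ is a tilting object on $X_E=X\otimes_k E$, so Lemma 4.6 immediately gives that $\mathcal{T}^{\bullet}$, considered as an object of $D^b(X)\subset D(\mathrm{Qcoh}(X))$, is a tilting object on $X$ (including the finiteness of the global dimension of $\mathrm{End}(\mathcal{T}^{\bullet})$, which Lemma 4.6 records from the smoothness of $X$).

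Next I would invoke Theorem 4.2. Since $\mathcal{T}^{\bullet}\in D^b_G(X)\subset D_G(\mathrm{Qcoh}(X))$ by hypothesis, and since, as just established, $\mathcal{T}^{\bullet}$ viewed in $D(\mathrm{Qcoh}(X))$ is a tilting object on $X$, the hypotheses of Theorem 4.2 are met verbatim. The conclusion of that theorem is exactly that $\mathcal{T}^{\bullet}\otimes k[G]$ is a tilting object on $[X/G]$, which is what we want.

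There is no real obstacle here: the proof is a direct concatenation of the two earlier results. The only subtle point worth mentioning is that one must keep careful track of the two different roles of $\mathcal{T}^{\bullet}$ (namely, as an equivariant object of $D^b_G(X)$ and as a non-equivariant object of $D^b(X)$) and check that the hypotheses of Lemma 4.6 and Theorem 4.2 are fed the correct version; the equivariant structure is only used at the final step, when forming the tensor product with $k[G]$ and passing to $[X/G]$.
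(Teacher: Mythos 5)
Your proposal is correct and matches the paper's proof exactly: the paper likewise applies the base-change lemma (numbered Lemma 4.5 in the paper) to conclude that $\mathcal{T}^{\bullet}$ is a tilting object on $X$, and then invokes Theorem 4.2 to obtain the tilting object $\mathcal{T}^{\bullet}\otimes k[G]$ on $[X/G]$.
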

\begin{proof}
Since $\mathcal{T}^{\bullet}\otimes_k E$ is a tilting object on $X_E$, Lemma 4.5 implies that $\mathcal{T}^{\bullet}$ is a tilting object on $X$. As $\mathcal{T}^{\bullet}\in D^b_G(X)$, Theorem 4.2 yields the assertion. 
\end{proof}
\begin{exam}
\textnormal{Let $X$ be a $n-1$-dimensional Brauer--Severi variety (see \cite{NO} and references therein for details). Such a Brauer--Severi variety is associated to a central simple $k$-algebra $A$ of dimension $n^2$ in the following way: Consider the set of all left ideals $I$ of $A$ of rank $n$. This set can be given the structure of a smooth projective $k$-scheme by embedding it as a closed subscheme of $\mathrm{Grass}(n,n^2)$ defined by the relations stating that each $I$ is a left ideal. Indeed, there is a natural one-to-one correspondence between central simple algebras of dimension $n^2$ and Brauer--Severi varieties of dimension $n-1$ via Galois cohomology (see \cite{A}). As $X$ being a closed subscheme of the Grassmannian, it is endowed with a tautological sheaf $\mathcal{V}$ of rank $n$. This sheaf has a natural $A$ action. Note  that $A\otimes_k \bar{k}\simeq M_n(\bar{k})$. As $\mathrm{Aut}(X)=\mathrm{Aut}(A)= A^*/{k^{\times}}$ by the Skolem--Noether Theorem, we see that if the action of a finite subgroup $G\subset \mathrm{Aut}(X)$ lifts to an action of $A^*$, the tautological sheaf $\mathcal{V}$ has a natural equivariant structure. So $\bigoplus^n_{i=0}\mathcal{V}^{\otimes i}\in D^b_G(X)$. Since $\mathcal{V}^{\otimes i}\otimes_k \bar{k}\simeq \mathcal{O}_{\mathbb{P}^n}(-i)^{\oplus (n+1)^i}$ \cite{BLU}, the sheaf $(\bigoplus^n_{i=0}\mathcal{V}^{\otimes i})\otimes_k \bar{k}$ is a tilting bundle on $\mathbb{P}^n$. Proposition 4.6 gives a tilting bundle on $[X/G]$}.
\end{exam}
The next proposition shows that Example 4.7 cannot be obtained from the results given in \cite{EL1}, where tilting bundles on $[X/G]$ are constructed from full strongly exceptional collections. In particular it gives an example of a $k$-scheme admitting a tilting object, but not a full strongly exceptional collection.
\begin{prop}
Let $X\neq \mathbb{P}^1$ be a $1$-dimensional Brauer--Severi variety. Then $D^b(X)$ does not admit a full strongly exceptional collection.
\end{prop}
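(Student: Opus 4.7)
My plan is to classify all exceptional objects of $D^b(X)$ and then use a Riemann--Roch parity obstruction to rule out the strong semiorthogonality condition.

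First I would pin down the Picard group of $X$. Since $X\neq\mathbb{P}^1$, the associated central simple $k$-algebra is a non-split quaternion algebra of period $2$, so the image of the natural inclusion $\mathrm{Pic}(X)\hookrightarrow\mathrm{Pic}(X_{\bar{k}})\cong\mathbb{Z}$ coming from the Brauer-Severi/Brauer group exact sequence is exactly $2\mathbb{Z}$. Thus $\mathrm{Pic}(X)=\mathbb{Z}\cdot\mathcal{L}$ with $\deg\mathcal{L}=2$, and the canonical bundle is $\omega_X\cong\mathcal{L}^{-1}$. Riemann--Roch then yields $\chi(\mathcal{L}^n)=2n+1$ for every $n\in\mathbb{Z}$, a quantity that is never zero.

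Next I would show that every exceptional object of $D^b(X)$ is a shift of some $\mathcal{L}^a$. Because $X$ has global dimension one, every bounded complex splits as a direct sum of shifts of its cohomology sheaves, and an exceptional object is indecomposable, so it has the form $F[n]$ with $F$ an indecomposable coherent sheaf. If $F$ were torsion, the local triviality of $\omega_X$ near the zero-dimensional support of $F$ gives $F\otimes\omega_X\cong F$, whence Serre duality yields $\mathrm{Ext}^1(F,F)\cong\mathrm{End}(F)^{\vee}\neq0$, contradicting exceptionality. Hence $F$ is locally free, and flat base change gives $\mathrm{End}(F)\otimes_k\bar{k}\cong\mathrm{End}(F_{\bar{k}})$. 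Since Grothendieck's splitting theorem writes $F_{\bar{k}}$ as a direct sum of line bundles on $X_{\bar{k}}\cong\mathbb{P}^1_{\bar{k}}$, one gets $\dim_{\bar{k}}\mathrm{End}(F_{\bar{k}})\geq\mathrm{rank}(F)$, so the condition $\mathrm{End}(F)=k$ forces $\mathrm{rank}(F)=1$ and $F\cong\mathcal{L}^a$.

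Finally, any candidate full strongly exceptional collection must consist of shifted line bundles. A single object $\mathcal{L}^a[s]$ fails to generate $D^b(X)$, since its thick closure contains only shifts of $\mathcal{L}^a$ and omits every other line bundle. A strongly exceptional pair $(\mathcal{L}^a[s],\mathcal{L}^b[t])$ would require $\mathrm{Hom}(\mathcal{L}^b[t],\mathcal{L}^a[s][l])=0$ for all $l\in\mathbb{Z}$, equivalently $H^0(X,\mathcal{L}^{a-b})=H^1(X,\mathcal{L}^{a-b})=0$, forcing $\chi(\mathcal{L}^{a-b})=0$; but $\chi(\mathcal{L}^n)=2n+1$ never vanishes, a contradiction. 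Since any collection of length $\geq 2$ contains such a pair as its first two terms, no full strongly exceptional collection exists. The main technical step is the base-change classification of exceptional bundles as line bundles; once that is in hand, the Riemann--Roch parity obstruction is immediate.
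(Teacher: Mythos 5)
Your proof is correct, and while it follows the same broad strategy as the paper (reduce to coherent sheaves via heredity of $\mathrm{Coh}(X)$, then classify the possible exceptional sheaves by base change to $\mathbb{P}^1_{\bar k}$), the decisive obstruction you use is genuinely different. The paper first invokes $K_0(X)\cong\mathbb{Z}^2$ to fix the length of a putative collection at two, identifies the candidates after base change as $\mathcal{O}_{\mathbb{P}^1}(2n)$ or skyscrapers, and then kills each case by exhibiting a nonzero $\mathrm{Ext}^1$ (namely $\mathrm{Ext}^1(\mathcal{O}(2n),\mathcal{O})\neq 0$ for $n>0$, resp.\ $\mathrm{Ext}^1(\bar k(x),\bar k(x))\simeq T_x\neq 0$); this argument genuinely uses the \emph{strong} exceptionality condition. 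You instead rule out torsion sheaves intrinsically by Serre duality on $X$, force rank one by comparing $\mathrm{End}(F)\otimes_k\bar k$ with $\mathrm{End}(F_{\bar k})$, and then observe that semiorthogonality alone for a pair $(\mathcal{L}^a[s],\mathcal{L}^b[t])$ would force $\chi(\mathcal{L}^{a-b})=0$, which is impossible since $\chi(\mathcal{L}^n)=2n+1$ is odd. This parity argument buys you a strictly stronger conclusion -- $D^b(X)$ admits no exceptional \emph{pair} whatsoever, hence no full exceptional collection, strong or not -- and it sidesteps both the $K_0$ computation and the case analysis on which line bundle has larger degree. The one point worth making explicit in a final write-up is the computation $\mathrm{Pic}(X)=2\mathbb{Z}$ (the paper cites its reference [NO] for this); your Brauer-group exact-sequence justification is fine, and one can alternatively note that a degree-one line bundle would produce a $k$-point by Riemann--Roch.
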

\begin{proof}
We first prove that $D^b(X)$ does not admit a full strongly exceptional collection consisting of coherent sheaves. Denote by $\mathcal{V}$ the tautological sheaf on $X$. Since $\mathcal{T}=\mathcal{O}_X\oplus \mathcal{V}$ is a tilting bundle on $X$ (see \cite{BLU}), the Grothendieck group ${K}_0(X)$ is a free abelian group of rank two. So assume $\mathcal{E}_1$ and $\mathcal{E}_2$ are coherent sheaves on $X$ forming a full strongly exceptional collection. In particular, $\mathrm{End}(\mathcal{E}_i)=k$ and therefore $\mathrm{End}(\mathcal{E}_i\otimes_k \bar{k})=\bar{k}$. 
We see that $\mathcal{E}_1\otimes_k \bar{k}$ and $\mathcal{E}_2\otimes_k \bar{k}$ are simple coherent sheaves on $X\otimes_k \bar{k}\simeq\mathbb{P}^1$. A simple sheaf $\mathcal{F}$ on $\mathbb{P}^1$ here means $\mathrm{End}(\mathcal{F})= \bar{k}$. Now simple coherent sheaves on the projective line are known to be invertible sheaves or skyscraper sheaves supported on a closed point. So $\mathcal{E}_i\otimes_k \bar{k}$ has to be isomorphic to either $\mathcal{O}_{\mathbb{P}^1}(n)$ or $\bar{k}(x)$. Note that every invertible sheaf on $\mathbb{P}^1$ coming from an invertible sheaf on the Brauer--Severi variety $X$ is of the form $\mathcal{O}_{\mathbb{P}^1}(2n)$ (see \cite{NO}, Section 6). There are two cases that have to be considered:
\begin{itemize}
      \item[\bf (i)] Assume both $\mathcal{E}_1\otimes_k \bar{k}$ and $\mathcal{E}_2\otimes_k \bar{k}$ are invertible sheaves. In this case $\mathcal{E}_1\otimes_k \bar{k}=\mathcal{O}_{\mathbb{P}^1}(2n)$ and $\mathcal{E}_2\otimes_k \bar{k}=\mathcal{O}_{\mathbb{P}^1}(2m)$. Without loss of generality assume $\mathcal{E}_1\otimes_k\bar{k}=\mathcal{O}_{\mathbb{P}^1}$ and $\mathcal{E}_2\otimes_k\bar{k}=\mathcal{O}_{\mathbb{P}^1}(2n)$, with $n>0$. But then we have
\begin{eqnarray*}
\mathrm{Ext}^1(\mathcal{O}_{\mathbb{P}^1}(2n),\mathcal{O}_{\mathbb{P}^1})\simeq H^1(X,\mathcal{O}_{\mathbb{P}^1}(-2n))\neq 0. 
\end{eqnarray*}

			\item[\bf (ii)] Now assume that at least one of the sheaves $\mathcal{E}_i\otimes_k \bar{k}$ is a skyscraper sheaf. Without loss of generality assume $\mathcal{E}_1\otimes_k \bar{k}= \bar{k}(x)$. Then 
			\begin{center}
	$\mathrm{Ext}^1(\bar{k}(x),\bar{k}(x))\simeq T_x$,	
		\end{center} where $T_x$ is the tangent space of $\mathbb{P}^1$ in $x$ (see \cite{HUY}, Example 11.9) that clearly is non-zero. 
\end{itemize}		
We see that $D^b(X)$ does not admit a full strongly exceptional collection consisting of coherent sheaves. To conclude that $D^b(X)$ does not admit a full strongly exceptional collection consisting of arbitrary objects, we note that $\mathrm{Coh}(X)$ is hereditary. According to \cite{KE}, Subsection 2.5, every object $\mathcal{G}\in D^b(X)$ is of the form $\bigoplus_{i\in \mathbb{Z}}H^i(\mathcal{G})[i]$. Since exceptional objects are indecomposable, the exceptional objects in $D^b(X)$ are just shifts of exceptional coherent sheaves. With the arguments from above this finally implies that $D^b(X)$ does not admits a full strongly exceptional collection.  	
\end{proof}

\begin{con2}
\textnormal{Let $X\neq \mathbb{P}^n$ be a $n$-dimensional Brauer--Severi variety. Then $D^b(X)$ does not admit a full strongly exceptional collection}.
\end{con2}
Theorem 4.1 states more or less that the stack $[X/G]$ has a tilting object if $X$ has one. Below we will see that the other implication is wrong in general (see Example 4.10). For this, we roughly recall the derived McKay correspondence and refer to the work of Bridgeland, King and Reid \cite{BK} for details.

Let $k$ be an algebraically closed field of characteristic zero and $X$ a quasiprojective $k$-scheme. Furthermore, let $G$ be a finite subgroup of $\mathrm{Aut}(X)$. Note that the quotient scheme $X/\!\!/G$ is usually singular. The main idea of McKay correspondence is to find a certain "nice" resolution of $X/\!\!/G$ and to relate the geometry of the resolution to that of $X/\!\!/G$. Recall, a resolution of singularities $f\colon \tilde{X}\rightarrow X$ of a given non-singular $X$ is called \emph{crepant} if $f^*\omega_X=\omega_{\tilde{X}}$. Whether such resolutions exist is a difficult question and closely related to the minimal model program. 

Now denote by $\mathrm{Hilb}_G(X)$ the $G$-Hilbert scheme of $X$ (see \cite{BL} for details on $G$-Hilbert schemes) and let $Y\subset \mathrm{Hilb}_G(X)$ be the irreducible component containing the free orbits. Suppose $G$ acts on $X$ such that $\omega_X$ is locally trivial in $\mathrm{Coh}_G(X)$ and write $Z\subset X\times Y$ for the universal closed subscheme. Then there is a commutative diagram of schemes
\begin{displaymath}
\begin{xy}
  \xymatrix{
      Z \ar[r]^{q} \ar[d]_{p}    &   X\ar[d]^{\pi}                   \\
      Y\ar[r]^{\tau}             &   X/\!\!/G           
  }
\end{xy}
\end{displaymath} such that $q$ and $\tau$ are birational and $p$ and $\pi$ finite. Moreover $p$ is flat. One then has the \emph{derived McKay-correspondence} (see \cite{BK}, Theorem 1.1):

\begin{thm}
Let $X$, $G$ and $Y$ be as in the diagram and suppose that $\omega_X$ is locally trivial in $\mathrm{Coh}_G(X)$. Suppose furthermore, $\mathrm{dim}(Y\times_{X/\!\!/G} Y)<\mathrm{dim}(X)+1$, then 
\begin{eqnarray*}
\mathbb{R}q_*\circ p^*\colon D^b(Y)\stackrel{\sim}\longrightarrow D^b_G(X)
\end{eqnarray*} is an equivalence and $\tau: Y\rightarrow X/\!\!/G$ a crepant resolution. 
\end{thm}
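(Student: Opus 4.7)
The plan is to recognize $\mathbb{R}q_{*}\circ p^{*}$ as a Fourier--Mukai-type functor with kernel $\mathcal{O}_{Z}\in D^{b}_{G}(Y\times X)$, then prove fully faithfulness via the Bondal--Orlov criterion, promote this to an equivalence with a Serre functor argument, and finally read off crepancy from the fact that Serre functors must match across the equivalence.

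First I would justify that $\Phi := \mathbb{R}q_{*}\circ p^{*}$ has the expected shape. Since $p$ is flat and finite, $p^{*}$ sends $D^{b}(Y)$ into $D^{b}_{G}(Z)$; since $q$ is projective, $\mathbb{R}q_{*}$ lands in $D^{b}_{G}(X)$. The projection formula identifies this with the FM transform along $\mathcal{O}_{Z}$, and the equivariant triviality of $\omega_{X}$ guarantees that its left and right adjoints are expressible by twisting $\mathcal{O}_{Z}$ with shifted line bundles equivariantly.

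Second, and this is the heart of the matter, I would verify fully faithfulness of $\Phi$ by the equivariant Bondal--Orlov criterion applied to the family of skyscrapers $\{\mathcal{O}_{y}\}_{y\in Y}$, which spans $D^{b}(Y)$. Concretely, setting $\mathcal{F}_{y}:=\Phi(\mathcal{O}_{y})$, which is the structure sheaf of the $G$-cluster $Z_{y}=p^{-1}(y)\subset X$, it suffices to check $\mathrm{Hom}_{G}(\mathcal{F}_{y_{1}},\mathcal{F}_{y_{2}}[i])=0$ for $i\notin[0,\dim Y]$ or $y_{1}\ne y_{2}$, together with $\mathrm{Hom}_{G}(\mathcal{F}_{y},\mathcal{F}_{y})\simeq k$. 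Off the fiber product $Y\times_{X/\!\!/G}Y$ the supports of the $Z_{y}$ are disjoint and the vanishing is automatic, so everything comes down to the behaviour at the diagonal and at pairs of points lying over the same orbit. Here the hypothesis $\dim(Y\times_{X/\!\!/G}Y)<\dim X+1$ is decisive: it forces $Y\times_{X/\!\!/G}Y$ to have dimension exactly $\dim Y$, making the diagonal an irreducible component whose codimension in $Y\times Y$ equals $\dim Y$. Combined with the New Intersection Theorem (Peskine--Szpiro, Roberts), this codimension bound, applied to a locally free resolution of $\mathcal{O}_{Z}\otimes\mathcal{O}_{Z}$ restricted to the diagonal, yields precisely the Ext-vanishings required by Bondal--Orlov, and the Gorenstein hypothesis (from equivariant triviality of $\omega_{X}$) computes $\mathrm{End}_{G}(\mathcal{F}_{y})=k$.

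Third, I would upgrade fully faithfulness to an equivalence. Both $D^{b}(Y)$ and $D^{b}_{G}(X)$ admit Serre functors; on the right hand side it is a pure shift $[\dim X]$ (up to an equivariant twist) since $\omega_{X}$ is equivariantly trivial. A fully faithful functor between categories with Serre functors whose essential image is closed under shifts and under the Serre functor is automatically an equivalence, provided it is essentially surjective on a spanning set. The FM kernel $\mathcal{O}_{Z}$ ensures the skyscrapers $\{\mathcal{O}_{Gx}\}$ lie in the image, and since these generate $D^{b}_{G}(X)$, essential surjectivity follows. Finally, because the equivalence intertwines Serre functors, $\omega_{Y}[\dim Y]$ must match a shift on the right hand side, forcing $\omega_{Y}\simeq \mathcal{O}_{Y}$ and $\dim Y=\dim X$; comparing this with $\tau^{*}\omega_{X/\!\!/G}=\omega_{Y}$ over the smooth locus of $X/\!\!/G$ (where $\tau$ is an isomorphism) and extending across the exceptional locus via normality gives $\tau^{*}\omega_{X/\!\!/G}=\omega_{Y}$, i.e.\ $\tau$ is crepant.

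The main obstacle is squarely the Ext-vanishing step: translating the geometric hypothesis on $\dim(Y\times_{X/\!\!/G}Y)$ into the algebraic Ext vanishings needed for Bondal--Orlov. In Bridgeland--King--Reid this is exactly where the intersection theorem enters, and where most of the work resides; the Fourier--Mukai formalism, the Serre functor argument, and the crepancy deduction are relatively formal once that input is available.
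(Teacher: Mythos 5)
This statement is not proved in the paper at all: it is Theorem~1.1 of Bridgeland--King--Reid, quoted verbatim with a citation to \cite{BK}, so there is no internal proof to compare against. Your sketch is a reasonable reconstruction of the broad architecture of the BKR argument (Fourier--Mukai functor with kernel $\mathcal{O}_Z$, reduction to Hom-vanishing between the $G$-clusters $\Phi(\mathcal{O}_y)$, the intersection theorem as the key input translating the bound on $\dim(Y\times_{X/\!\!/G}Y)$ into the required vanishing, and an adjoint/Serre-functor argument to pass from fully faithful to equivalence). In that sense the overall route is the right one.

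However, two steps as you have written them are genuinely wrong. First, the crepancy deduction: from ``the equivalence intertwines Serre functors'' you conclude $\omega_Y\simeq\mathcal{O}_Y$. This is false in general --- in Example~4.10 of the paper, $C$ an elliptic curve with $G=\{\pm 1\}$ gives $Y\simeq\mathbb{P}^1$, whose canonical sheaf is $\mathcal{O}(-2)$. The hypothesis is only that $\omega_X$ is \emph{locally} trivial in $\mathrm{Coh}_G(X)$, so the Serre functor of $D^b_G(X)$ is not a pure shift globally; it is a pure shift only on objects supported on a single fibre of $X\to X/\!\!/G$, and the correct conclusion is triviality of $\omega_Y$ along the fibres of $\tau$ (equivalently $\omega_Y=\tau^*\omega_{X/\!\!/G}$), which is exactly crepancy. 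Second, you treat $D^b(Y)$ as having Serre functor $\omega_Y[\dim Y]$ from the outset, but smoothness of $Y$ is part of the \emph{conclusion} of the theorem, not a hypothesis; in BKR the intersection theorem is used precisely to show that $\mathcal{O}_y$ has finite homological dimension, hence that $Y$ is smooth, before any Serre duality on $Y$ can be invoked. Relatedly, the essential surjectivity step is off: $\Phi(\mathcal{O}_y)$ is the structure sheaf of a $G$-cluster, not the skyscraper of an orbit (these differ over non-free orbits), and the actual argument proceeds by showing the right orthogonal of the image is zero using indecomposability of $D^b_G(X)$ together with the fibrewise Serre-functor compatibility, rather than by exhibiting a generating set in the image.
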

The condition that $\omega_X$ is locally trivial in $\mathrm{Coh}_G(X)$ is for instance fulfilled if $G_x\subset \mathrm{SL}(T_x)$ for all closed points $x\in X$, where $G_x$ is the stabilizer subgroup and $T_x$ the tangent space.

If $\mathrm{dim}(X)\leq 3$, the $G$-Hilbert scheme $\mathrm{Hilb}_G(X)$ is irreducible and hence there is an equivalence $\mathbb{R}q_*\circ p^*\colon D^b(\mathrm{Hilb}_G(X))\stackrel{\sim}\rightarrow D^b_G(X)$. In this case $\mathrm{Hilb}_G(X)\rightarrow X/\!\!/G$ is a crepant resolution (see \cite{BK}, Theorem 1.2). Note that Blume \cite{BL} proved the classical McKay correspondence for non algebraically closed fields of characteristic zero via Galois descent. 
\begin{exam}
\textnormal{Let $C$ be an elliptic curve over an algebraically closed field of characteristic zero with $G=\{id,-id\}=\mathrm{Aut}(C)$, i.e. $j\neq 0$ and $j\neq 1728$. Note that $C$ cannot have a tilting object, since the Grothendieck group is not a free abelian group of finite rank. Now $C/\!\!/G \simeq \mathbb{P}^1$ and hence it is its own crepant resolution. As $\mathbb{P}^1$ admits a tilting bundle, the derived McKay correspondence gives a tilting object on $[C/G]$}.  
\end{exam}
Following the idea of exploiting the derived McKay correspondence to obtain further examples of schemes having tilting objects, we state the following useful consequence of Theorem 4.2.
\begin{cor}
Let $X$, $G$ and $Y$ be as in Theorem 4.9, with $\omega_X$ being locally trivial in $\mathrm{Coh}_G(X)$. Suppose $\mathrm{dim}(Y\times_{X/\!\!/G} Y)<\mathrm{dim}(X)+1$ and that $\mathcal{T}^{\bullet}\in D_G(\mathrm{Qcoh}(X))$, considered as an object in $D(\mathrm{Qcoh}(X))$, is a tilting object on $X$. Then $Y$ admits a tilting object.
\end{cor}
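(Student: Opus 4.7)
The plan is to combine Theorem 4.2 with the derived McKay equivalence of Theorem 4.9 and transport the resulting tilting object on $[X/G]$ back to $Y$. First I would apply Theorem 4.2 to the given $\mathcal{T}^{\bullet}$: since $\mathcal{T}^{\bullet}$ is a tilting object on $X$ with an equivariant structure, the object $\mathcal{T}_G := \mathcal{T}^{\bullet}\otimes k[G]$ is a tilting object on $[X/G]$, and because tilting objects are compact and $X$ is smooth quasiprojective, $\mathcal{T}_G$ lies in $D^b_G(X)$.

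Next, I would invoke Theorem 4.9: under the standing assumptions $\omega_X$ locally trivial in $\mathrm{Coh}_G(X)$ and $\mathrm{dim}(Y\times_{X/\!\!/G} Y)<\mathrm{dim}(X)+1$, the functor $\Phi:=\mathbb{R}q_*\circ p^*\colon D^b(Y)\stackrel{\sim}\rightarrow D^b_G(X)$ is a $k$-linear triangulated equivalence. Define $\mathcal{S}:=\Phi^{-1}(\mathcal{T}_G)\in D^b(Y)$; this is the candidate tilting object on the crepant resolution $Y$.

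It then remains to verify the three conditions of Definition 3.2 for $\mathcal{S}$ on $Y$ (equivalently, by Remark 3.4, to check Ext vanishing and generation of $D^b(Y)$ as a thick subcategory, since $Y$ is smooth). For Ext vanishing,
\[
\mathrm{Hom}_Y(\mathcal{S},\mathcal{S}[i])\simeq \mathrm{Hom}_G(\Phi(\mathcal{S}),\Phi(\mathcal{S})[i])=\mathrm{Hom}_G(\mathcal{T}_G,\mathcal{T}_G[i])=0
\]
for $i\neq 0$, because $\Phi$ is an equivalence and $\mathcal{T}_G$ is tilting on $[X/G]$. For generation, the thick subcategory $\langle\mathcal{S}\rangle\subset D^b(Y)$ is sent by $\Phi$ to $\langle\mathcal{T}_G\rangle=D^b_G(X)$, so $\langle\mathcal{S}\rangle=D^b(Y)$; equivalently, any $\mathcal{F}^{\bullet}\in D(\mathrm{Qcoh}(Y))$ with $\mathbb{R}\mathrm{Hom}_Y(\mathcal{S},\mathcal{F}^{\bullet})=0$ corresponds under the (compactly generated extension of the) equivalence to an object killed by $\mathbb{R}\mathrm{Hom}_G(\mathcal{T}_G,-)$, hence is zero by the tilting property of $\mathcal{T}_G$. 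Compactness of $\mathcal{S}$ is automatic since $\mathcal{S}\in D^b(Y)$ on a smooth quasiprojective scheme.

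The only genuinely delicate point, and the step I would treat most carefully, is the passage from the bounded equivalence of Theorem 4.9 to the generation statement for $D(\mathrm{Qcoh}(Y))$; this is handled by the general fact that a compact object on a smooth scheme (respectively tame Deligne--Mumford stack) generates the unbounded derived category of quasi-coherent sheaves iff it generates $D^b$ as a thick subcategory (Remark 3.4), so it suffices to transport the thick generation statement through $\Phi$. All other ingredients are bookkeeping with the equivalence.
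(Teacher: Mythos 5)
Your proposal is correct and follows the paper's own argument exactly: apply Theorem 4.2 to obtain the tilting object $\mathcal{T}^{\bullet}\otimes k[G]$ on $[X/G]$, then transport it to $Y$ along the inverse of the McKay equivalence $\mathbb{R}q_*\circ p^*$ from Theorem 4.9. Your additional care about passing from the bounded equivalence to generation of $D(\mathrm{Qcoh}(Y))$ via compactness and Remark 3.4 is a detail the paper leaves implicit, but it is the right justification.
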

\begin{proof}
Since $\mathcal{T}^{\bullet}\in D_G(\mathrm{Qcoh}(X))$, considered as an object in $D(\mathrm{Qcoh}(X))$, is a tilting object on $X$, we know from Theorem 4.2 that $\mathcal{T}^{\bullet}\otimes k[G]$ is a tilting object on $[X/G]$. As $\mathrm{dim}(Y\times_{X/\!\!/G} Y)<\mathrm{dim}(X)+1$, we have the derived McKay correspondence $\mathbb{R}q_*\circ p^*\colon D^b(Y)\stackrel{\sim}\rightarrow D^b_G(X)$. If we denote by $F$ the inverse of $\mathbb{R}q_*\circ p^*$, then $F(\mathcal{T}^{\bullet}\otimes k[G])$ is a tilting object on $Y$.
\end{proof}
Note that there is no reason for the inverse of the functor $\mathbb{R}q_*\circ p^*$ to send a coherent sheaf to a coherent sheaf. So in general the McKay correspondence gives a tilting object on $Y$ even if $[X/G]$ admits a coherent tilting sheaf. 
\begin{cor}
For $n\leq 3$, let $G\subset \mathrm{Aut}(\mathbb{P}^n)$ be a finite subgroup acting linearly on $\mathbb{P}^n$. Suppose $\omega_{\mathbb{P}^n}$ is locally trivial in $\mathrm{Coh}_G(\mathbb{P}^n)$. Then $\mathrm{Hilb}_G(\mathbb{P}^n)$ admits a tilting object.
\end{cor}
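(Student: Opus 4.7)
The plan is to deduce this corollary as a direct application of Corollary 4.11 with $X = \mathbb{P}^n$ and $Y = \mathrm{Hilb}_G(\mathbb{P}^n)$. I will verify each hypothesis of Corollary 4.11 in turn.

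First, I need to produce an equivariant tilting object on $\mathbb{P}^n$. This is exactly what Example 4.3 provides: Beilinson's tilting bundle $\mathcal{T} = \bigoplus_{i=0}^{n} \mathcal{O}_{\mathbb{P}^n}(i)$ is a tilting bundle on $\mathbb{P}^n$, and since $\mathrm{Pic}(G) = 0$ for the finite group $G \subset \mathrm{PGL}_{n+1}(k)$, each invertible sheaf $\mathcal{O}_{\mathbb{P}^n}(i)$ admits an equivariant structure. Choosing such a structure on each summand exhibits $\mathcal{T}$ as an object of $D^b_G(\mathbb{P}^n) \subset D_G(\mathrm{Qcoh}(\mathbb{P}^n))$ whose image in $D(\mathrm{Qcoh}(\mathbb{P}^n))$ is a tilting object on $\mathbb{P}^n$.

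Next, I need to check the hypotheses concerning the derived McKay correspondence. The condition that $\omega_{\mathbb{P}^n}$ is locally trivial in $\mathrm{Coh}_G(\mathbb{P}^n)$ is built into the statement. The remaining condition $\dim(Y \times_{X/\!\!/G} Y) < \dim(X) + 1$ in Theorem 4.9 is automatic in our setting: as recorded in the paragraph following Theorem 4.9, for $\dim(X) \leq 3$ the $G$-Hilbert scheme $\mathrm{Hilb}_G(X)$ is irreducible, hence equals the component $Y$ of that theorem, and the dimension hypothesis holds so that $\mathbb{R}q_* \circ p^* \colon D^b(\mathrm{Hilb}_G(\mathbb{P}^n)) \xrightarrow{\sim} D^b_G(\mathbb{P}^n)$ is an equivalence.

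With all the hypotheses of Corollary 4.11 verified, I conclude that $Y = \mathrm{Hilb}_G(\mathbb{P}^n)$ admits a tilting object; explicitly, it is $F(\mathcal{T} \otimes k[G])$, where $F$ is the inverse of the derived McKay equivalence and $\mathcal{T} \otimes k[G]$ is the tilting object on $[\mathbb{P}^n/G]$ produced by Theorem 4.2. Since every nontrivial input (Beilinson's tilting bundle, equivariance of line bundles on $\mathbb{P}^n$, McKay correspondence in dimension $\leq 3$) is already recorded in the excerpt, there is essentially no obstacle here: the proof is a bookkeeping exercise combining Example 4.3, Theorem 4.2, and Theorem 4.9, packaged through Corollary 4.11. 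The only minor point requiring care is noting that the irreducibility of $\mathrm{Hilb}_G(\mathbb{P}^n)$ in dimensions $\leq 3$ lets us identify $Y$ with the entire $G$-Hilbert scheme, which is what the statement asserts carries a tilting object.
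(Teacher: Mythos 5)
Your argument is correct and is essentially identical to the paper's own proof: both deduce the statement from Corollary 4.11 by combining Example 4.3 (the equivariant structure on Beilinson's tilting bundle via $\mathrm{Pic}(G)=0$) with the discussion after Theorem 4.9 guaranteeing that for $n\leq 3$ the $G$-Hilbert scheme is irreducible and the derived McKay equivalence applies. You have merely spelled out the verification of the hypotheses in more detail than the paper does.
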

\begin{proof}
The sheaf $\bigoplus^n_{i=0}\mathcal{O}_{\mathbb{P}^n}(i)$ is a tilting bundle on $\mathbb{P}^n$ equipped with an equivariant structure (see Example 4.3). Corollary 4.11 and the discussion right after Theorem 4.9 give a tilting object on $\mathrm{Hilb}_G(\mathbb{P}^n)$.  
\end{proof}
\begin{cor}
Let $X=\mathrm{Grass}(d,n)$ be the Grassmannian of Example 4.4. Let $G$ be a finite subgroup of $\mathrm{PGL}_n(k)$ acting linearly on $X$ and suppose $\omega_X$ is locally trivial in $\mathrm{Coh}_G(X)$. Let $Y\subset \mathrm{Hilb}_G(X)$ be the irreducible component containing the free orbits and suppose $\mathrm{dim}(Y\times_{X/\!\!/G} Y)<\mathrm{dim}(X)+1$. Then $Y$ admits a tilting object. 
\end{cor}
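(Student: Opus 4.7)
The plan is to combine Kapranov's tilting bundle on the Grassmannian, the equivariant upgrade of it recorded in Example 4.4, and the McKay-type transfer mechanism of Corollary 4.11. In essence, Corollary 4.13 is set up so that each hypothesis of Corollary 4.11 has a ready-made input in this situation, and the only substantive thing to supply is an equivariant tilting object on $X=\mathrm{Grass}(d,n)$.

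First, I would invoke Example 4.4 to produce the required tilting object. Kapranov's theorem gives that $\mathcal{T} := \bigoplus_{\lambda} \Sigma^{\lambda}(\mathcal{S})$ is a tilting bundle on $X$, where $\mathcal{S}$ is the tautological subbundle and $\lambda$ ranges over the appropriate partitions. Because $G \subset \mathrm{PGL}_n(k)$ acts linearly by hypothesis, the action lifts to $\mathrm{GL}_n(k)$, so $\mathcal{S}$ inherits a natural $G$-equivariant structure; by functoriality of the Schur construction each $\Sigma^{\lambda}(\mathcal{S})$ inherits one as well, and hence so does their direct sum. Thus $\mathcal{T} \in D^b_G(X)$, and its image in $D^b(X)$ is still the Kapranov tilting bundle. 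This is exactly the input required by Corollary 4.11.

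Second, I would verify that the remaining hypotheses of Corollary 4.11 are directly assumed in the statement: $\omega_X$ is locally trivial in $\mathrm{Coh}_G(X)$, the dimension bound $\mathrm{dim}(Y \times_{X/\!\!/G} Y) < \mathrm{dim}(X) + 1$ is imposed, and the standing assumption $\mathrm{char}(k) \nmid \mathrm{ord}(G)$ is automatic since $k$ has characteristic zero. Applying Corollary 4.11 with $\mathcal{T}$ then immediately produces a tilting object on $Y$; explicitly, if $F$ denotes the inverse of the derived McKay equivalence $\mathbb{R}q_* \circ p^* \colon D^b(Y) \xrightarrow{\sim} D^b_G(X)$ from Theorem 4.9, then $F(\mathcal{T} \otimes k[G])$ is the desired tilting object.

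There is no real obstacle here; the proof is a bookkeeping exercise that strings together Example 4.4, Theorem 4.2, and the derived McKay correspondence. The only point worth pausing over is verifying that the natural equivariant structure on each $\Sigma^{\lambda}(\mathcal{S})$ satisfies the cocycle condition $\lambda_{gh} = h^*\lambda_g \circ \lambda_h$, but this is routine because both pullback and the Schur functor respect composition, so the cocycle condition on $\mathcal{S}$ propagates to each $\Sigma^{\lambda}(\mathcal{S})$ and hence to $\mathcal{T}$.
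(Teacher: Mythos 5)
Your proposal matches the paper's own proof, which simply cites Example 4.4 (for the equivariant Kapranov tilting bundle $\bigoplus_\lambda \Sigma^\lambda(\mathcal{S})$) and then applies Corollary 4.11; your extra remarks on the cocycle condition and the hypothesis checklist are correct elaborations of the same argument.
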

\begin{proof}
This follows from Example 4.4 and Corollary 4.11.
\end{proof}
\section{Tilting bundles on $[\mathbb{A}(\mathcal{E})/G]$ and $[\mathbb{P}(\mathcal{E})/G]$}
We start with a generalization of results given in \cite{BR} and \cite{BS}. In loc. cit., among others, the existence of tilting objects on certain total spaces is proved. Below we study total spaces with finite group actions.\\

Let $G$ be a finite group acting on a smooth projective $k$-scheme $X$. Furthermore, let $\mathcal{E}$ be an equivariant locally free sheaf of finite rank. Consider the total space $\mathbb{A}(\mathcal{E})=\mathcal{S}pec(S^{\bullet}(\mathcal{E}))$, where $S^{\bullet}(\mathcal{E})=\mathrm{Sym}(\mathcal{E})$ is the symmetric algebra of $\mathcal{E}$. Since $\mathcal{E}$ admits an equivariant structure, the group $G$ acts on $\mathbb{A}(\mathcal{E})$ in the natural way. Note that the total space comes equipped with a $G$-morphism $\pi:\mathbb{A}(\mathcal{E})\rightarrow X$ which is affine. Assuming the existence of a tilting bundle $\mathcal{T}$ on $[X/G]$, the question arises if $[\mathbb{A}(\mathcal{E})/G]$ admits a tilting bundle, too. There is a natural candidate for a tilting bundle on $[\mathbb{A}(\mathcal{E})/G]$. 

Consider the tilting bundle $\mathcal{T}$ on $[X/G]$. Then the pullback $\pi^*\mathcal{T}$ is a locally free sheaf on $\mathbb{A}(\mathcal{E})$ with a natural equivariant structure. Below we prove that under a certain condition, $\pi^*\mathcal{T}$ is a tilting bundle on $[\mathbb{A}(\mathcal{E})/G]$. We can also show the finiteness of the global dimension of $\mathrm{End}_G(\pi^*\mathcal{T})$. Since $\mathbb{A}(\mathcal{E})$ is not projective over $k$, Proposition 3.5 cannot be applied.  
But the proof of Proposition 3.5 also works if the endomorphism algebra is required to be a noetherian ring. Since $\mathbb{A}(\mathcal{E})$ is a noetherian scheme, $G$ a finite group and $\pi^*\mathcal{T}$ a coherent sheaf, one easily verifies that $\mathrm{End}_G(\pi^*\mathcal{T})$ is indeed a noetherian ring. With this fact we now prove the following result:

\begin{thm}
Let $X$ be a smooth projective $k$-scheme, $G$ a finite group acting on $X$ and $\mathcal{E}$ an equivariant locally free sheaf of finite rank. Suppose $\mathcal{T}$ is a tilting bundle on $[X/G]$. If $H^i(X,\mathcal{T}^{\vee}\otimes \mathcal{T}\otimes S^l(\mathcal{E}))=0$ for all $i\neq 0$ and all $l>0$, then $\pi^*\mathcal{T}$ is a tilting bundle on $[\mathbb{A}(\mathcal{E})/G]$.
\end{thm}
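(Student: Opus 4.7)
The strategy is to verify the three defining properties of a tilting object for $\pi^{*}\mathcal{T}$ on $[\mathbb{A}(\mathcal{E})/G]$ (Ext vanishing, generation, compactness) and then invoke the noetherian variant of Proposition 3.5 that was indicated just before the statement. The central tool throughout is that $\pi\colon \mathbb{A}(\mathcal{E})\to X$ is an affine $G$-morphism, so we have an equivariant adjunction $\pi^{*}\dashv \pi_{*}$, the underived pushforward computes $\mathbb{R}\pi_{*}$ on quasi-coherent sheaves, and the projection formula gives the equivariant identification
\[
\pi_{*}\pi^{*}\mathcal{T}\;\simeq\;\mathcal{T}\otimes\pi_{*}\mathcal{O}_{\mathbb{A}(\mathcal{E})}\;\simeq\;\bigoplus_{l\geq 0}\mathcal{T}\otimes S^{l}(\mathcal{E}).
\]

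First I would handle Ext vanishing. By the $\pi^{*}\dashv \pi_{*}$ adjunction applied equivariantly and the compactness of $\mathcal{T}$ (which lets $\mathrm{Hom}_{G}(\mathcal{T},-)$ commute with the infinite direct sum produced by $\pi_{*}\mathcal{O}_{\mathbb{A}(\mathcal{E})}$), one has
\[
\mathrm{Hom}_{G}\bigl(\pi^{*}\mathcal{T},\pi^{*}\mathcal{T}[i]\bigr)\;\simeq\;\bigoplus_{l\geq 0}\mathrm{Hom}_{G}\bigl(\mathcal{T},\mathcal{T}\otimes S^{l}(\mathcal{E})[i]\bigr).
\]
The $l=0$ summand vanishes for $i\neq 0$ because $\mathcal{T}$ is tilting on $[X/G]$. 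For $l>0$ I would apply Lemma 2.2 to replace $\mathrm{Hom}_{G}$ by $(\mathrm{Hom}(-,-))^{G}$ and then use the local-to-global spectral sequence together with the fact that $\mathcal{T}$ is locally free to rewrite $\mathrm{Ext}^{i}(\mathcal{T},\mathcal{T}\otimes S^{l}(\mathcal{E}))\simeq H^{i}(X,\mathcal{T}^{\vee}\otimes\mathcal{T}\otimes S^{l}(\mathcal{E}))$, which vanishes by the standing hypothesis. Taking $G$-invariants then kills the whole group.

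For generation I would take $\mathcal{F}^{\bullet}\in D_{G}(\mathrm{Qcoh}(\mathbb{A}(\mathcal{E})))$ with $\mathbb{R}\mathrm{Hom}_{G}(\pi^{*}\mathcal{T},\mathcal{F}^{\bullet})=0$ and use adjunction plus $\mathbb{R}\pi_{*}=\pi_{*}$ (affine) to rewrite this as $\mathbb{R}\mathrm{Hom}_{G}(\mathcal{T},\pi_{*}\mathcal{F}^{\bullet})=0$. Since $\mathcal{T}$ generates $D_{G}(\mathrm{Qcoh}(X))$, we conclude $\pi_{*}\mathcal{F}^{\bullet}=0$, and because $\pi$ is affine the functor $\pi_{*}$ is faithful on quasi-coherent sheaves (and therefore conservative on their derived category), so $\mathcal{F}^{\bullet}=0$. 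Compactness of $\pi^{*}\mathcal{T}$ is essentially the same computation as in Step~1: by adjunction, $\mathrm{Hom}_{G}(\pi^{*}\mathcal{T},-)\simeq \mathrm{Hom}_{G}(\mathcal{T},\pi_{*}(-))$, and $\pi_{*}$ commutes with direct sums because $\pi$ is affine while $\mathrm{Hom}_{G}(\mathcal{T},-)$ does by compactness of $\mathcal{T}$.

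Finally, to invoke the noetherian version of Proposition 3.5 one must check that $A:=\mathrm{End}_{G}(\pi^{*}\mathcal{T})$ is a noetherian ring. This is the step I expect to require the most care, since $\mathbb{A}(\mathcal{E})$ is no longer projective and $A$ is typically infinite-dimensional over $k$. I would argue that, by the projection formula above,
\[
A\;\simeq\;\mathrm{Hom}_{G}\Bigl(\mathcal{T},\mathcal{T}\otimes\bigoplus_{l\geq 0}S^{l}(\mathcal{E})\Bigr)
\]
is a finitely generated module over the (commutative, noetherian) ring $\bigl(H^{0}(X,S^{\bullet}(\mathcal{E}))\bigr)^{G}$, and hence noetherian; once this is in place, verbatim repetition of the proof of Proposition 3.5 (using smoothness of $X$ and Lemma 2.2 to bound $\mathrm{Ext}^{i}_{A}$) gives finite global dimension of $A$ and completes the argument.
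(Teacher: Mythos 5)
Your proposal is correct and follows essentially the same route as the paper: adjunction $\pi^{*}\dashv\pi_{*}$ plus the projection formula to reduce the Ext vanishing to the hypothesis on $H^{i}(X,\mathcal{T}^{\vee}\otimes\mathcal{T}\otimes S^{l}(\mathcal{E}))$ via Lemma 2.2, affineness of $\pi$ for generation, and the noetherian variant of Proposition 3.5 for finiteness of the global dimension. The only cosmetic differences are that the paper gets compactness directly from $\pi^{*}\mathcal{T}$ being coherent (citing Bondal--Van den Bergh) rather than via the adjunction, and it asserts the noetherianity of $\mathrm{End}_{G}(\pi^{*}\mathcal{T})$ with less detail than your sketch.
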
 
\begin{proof}
Note that $\pi^*\mathcal{T}$ is a coherent sheaf on $[\mathbb{A}(\mathcal{E})/G]$ and is therefore a compact object of $D(\mathrm{Qcoh}([\mathbb{A}(\mathcal{E})/G]))$ (see \cite{BV}). We now show that $\mathrm{Hom}_G(\pi^*\mathcal{T},\pi^*\mathcal{T}[i])=0$ for $i\neq 0$. Adjunction of $\pi^*$ and $\pi_*$, projection formula and Lemma 2.2 give
\begin{eqnarray*}
\mathrm{Hom}_G(\pi^*\mathcal{T},\pi^*\mathcal{T}[i])&\simeq &\mathrm{Hom}_G(\mathcal{T},\mathbb{R}\pi_*\pi^*\mathcal{T}[i])\\
 &\simeq &\mathrm{Hom}_G(\mathcal{T},S^{\bullet}(\mathcal{E})\otimes\mathcal{T}[i])\\
&\simeq &\mathrm{Hom}(\mathcal{T},S^{\bullet}(\mathcal{E})\otimes\mathcal{T}[i])^G.
\end{eqnarray*}
Now for a fixed $l>0$ one has 
\begin{center}
$\mathrm{Hom}(\mathcal{T},S^l(\mathcal{E})\otimes \mathcal{T}[i])\simeq \mathrm{Ext}^i(\mathcal{T},S^l(\mathcal{E})\otimes\mathcal{T})\simeq H^i(X,\mathcal{T}^{\vee}\otimes \mathcal{T}\otimes S^l(\mathcal{E}))$.
\end{center} By assumption, $H^i(X,\mathcal{T}^{\vee}\otimes \mathcal{T}\otimes S^l(\mathcal{E}))=0$ for all $i\neq 0$ and all $l>0$ and therefore $\mathrm{Hom}(\mathcal{T},S^{\bullet}(\mathcal{E})\otimes\mathcal{T}[i])^G=0$ for $i\neq 0$. Thus $\mathrm{Hom}_G(\pi^*\mathcal{T},\pi^*\mathcal{T}[i])=0$ for $i\neq 0$ and the Ext vanishing holds. 

It remains to prove that $\pi^*\mathcal{T}$ generates $D_G(\mathrm{Qcoh}(\mathbb{A}(\mathcal{E})))$. 
So we take an object $\mathcal{F}^{\bullet}\in D_G(\mathrm{Qcoh}(\mathbb{A}(\mathcal{E})))$ and assume $\mathbb{R}\mathrm{Hom}_G(\pi^*\mathcal{T},\mathcal{F}^{\bullet})=0$. Adjunction of $\pi^*$ and $\pi_*$ implies $\mathbb{R}\mathrm{Hom}_G(\mathcal{T},\pi_*\mathcal{F}^{\bullet})=0$. Since $\mathcal{T}$ is a tilting bundle on $[X/G]$, we get $\pi_*\mathcal{F}^{\bullet}=0$. As $\pi$ is affine, $\mathcal{F}^{\bullet}=0$ and hence $\pi^*\mathcal{T}_G$ generates $D_G(\mathrm{Qcoh}(\mathbb{A}(\mathcal{E})))$. Finally, since $\mathrm{End}_G(\pi^*\mathcal{T})$ is noetherian, the arguments in the proof of Proposition 3.5 show that the global dimension of $\mathrm{End}_G(\pi^*\mathcal{T})$ is indeed finite (notice that the noetherian property for the arguments of the proof of Proposition 3.5 is enough to conclude the finiteness of the global dimension).  
\end{proof}

If $X$ is a Fano variety with $\mathcal{E}=\omega_X$ and $G=1$ one verifies $H^i(X,\mathcal{T}^{\vee}\otimes \mathcal{T}\otimes S^l(\mathcal{E}))=0$ for all $i\neq 0$ and all $l>0$ and Theorem 5.1 gives \cite{BS}, Theorem 3.6 (see also \cite{BRI}, Proposition 4.1). For $X=\mathrm{Spec}(\mathbb{C})$, Theorem 5.1 gives \cite{BR}, Theorem 4.2.1. The arguments in the proof of Theorem 5.1 also unify and simplify the arguments given in the proves of Theorems 4.2.1 and 5.3.1 in \cite{BR}. From a representation-theoretic point of view it would also be of interest to figure out for which equivariant locally free sheaves $\mathcal{E}$ the endomorphism algebra $\mathrm{End}_G(\pi^*\mathcal{T})$ is Koszul. The existence of tilting bundles on certain total spaces also led Weyman and Zhao \cite{WZ} to a construction of non-commutative desingularizations. 
\begin{exam}
\textnormal{Let $X$ be a smooth projective $k$-scheme and $G$ a finite group acting on $X$. We take an equivariant ample invertible sheaf $\mathcal{L}$. Such a $\mathcal{L}$ always exist by the following argument: Let $\mathcal{M}$ be an ample invertible sheaf on $X$, then $g^*\mathcal{M}$ is ample for any $g\in G$. Now the tensor product $\bigotimes_{g\in G} g^*\mathcal{M}$ is ample and has a natural equivariant structure $\lambda$. Take $(\mathcal{L}, \lambda)=(\bigotimes_{g\in G} g^*\mathcal{M}, \lambda)$}. 

\textnormal{Let $\mathcal{T}$ be a tilting bundle on $[X/G]$ and set $\mathcal{E}=\mathcal{L}^{\otimes N}$. By the ampleness of $\mathcal{L}$, there exists a natural number $n\gg 0$ such that for all $N\geq n$ we have 
\begin{center}
$H^i(X,\mathcal{T}^{\vee}\otimes \mathcal{T}\otimes S^l(\mathcal{E}))\simeq H^i(X,\mathcal{T}^{\vee}\otimes \mathcal{T}\otimes \mathcal{L}^{\otimes l\cdot N})=0$
\end{center}
for all $i\neq 0$ and all $l>0$. In this case the stack $[\mathbb{A}(\mathcal{L}^{\otimes N})/G]$ admits a tilting bundle.}
\end{exam}
In view of Theorem 5.1, it is very natural to consider projective bundles with group actions. A semiorthogonal decomposition for the equivariant derived category projective bundles was constructed by Elagin \cite{EL}. Below we prove that if $[X/G]$ has a tilting bundle, then so does $[\mathbb{P}(\mathcal{E})/G]$. We start with some preliminary observation. 

Let $X$ be a smooth projective $k$-scheme and $G$ a finite group acting on $X$. Let $\mathcal{E}$ be an equivariant locally free sheaf of rank $r$ on $X$. We get a projective bundle $\mathbb{P}(\mathcal{E})$ on which $G$ acts naturally. The structure morphism $\pi:\mathbb{P}(\mathcal{E})\rightarrow X$ is a $G$-morphism and one has a semiorthogonal decomposition (see \cite{EL}, Theorem 4.3)
\begin{eqnarray}
D^b_G(\mathbb{P}(\mathcal{E}))=\langle \pi^*D^b_G(X),\pi^*D^b_G(X)\otimes \mathcal{O}_{\mathcal{E}}(1),...,\pi^*D^b_G(X)\otimes \mathcal{O}_{\mathcal{E}}(r-1)\rangle.
\end{eqnarray} 
Here $\pi^*D^b_G(X)\otimes \mathcal{O}_{\mathcal{E}}(i)$ denotes the subcategory of $D^b_G(\mathbb{P}(\mathcal{E}))$ consisting of objects of the form $\pi^*\mathcal{F}^{\bullet}\otimes \mathcal{O}_{\mathcal{E}}(i)$, where $\mathcal{F}^{\bullet}\in D^b_G(X)$. One easily proves the following lemma. 
\begin{lem}
Let $X$ be a smooth projective $k$-scheme and $G$ a finite group acting on $X$. Let $\mathcal{E}$ be an equivariant locally free sheaf of rank $r$ and $\mathbb{P}(\mathcal{E})$ the projective bundle. Let $\mathcal{A}^{\bullet}\in D^b_G(X)$ and suppose $\langle\mathcal{A}^{\bullet}\rangle=D_G^b(X)$. Then $\langle\bigoplus^{r-1}_{i=0}\pi^*\mathcal{A}^{\bullet}\otimes\mathcal{O}_{\mathcal{E}}(i)\rangle=D_G^b(\mathbb{P}(\mathcal{E}))$.
\end{lem}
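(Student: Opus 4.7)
The plan is to combine two observations: first, that exact triangulated functors and tensoring with a fixed invertible sheaf preserve the operation of taking thick subcategories, and second, that the semiorthogonal decomposition (2) already expresses $D^b_G(\mathbb{P}(\mathcal{E}))$ in terms of the blocks $\pi^{*}D^b_G(X)\otimes\mathcal{O}_{\mathcal{E}}(i)$. So it suffices to show that each of these blocks lies in $\langle\bigoplus_{i=0}^{r-1}\pi^{*}\mathcal{A}^{\bullet}\otimes\mathcal{O}_{\mathcal{E}}(i)\rangle$, since then the thick subcategory generated by the direct sum must contain the full subcategory generated by all the blocks, which by (2) is $D^b_G(\mathbb{P}(\mathcal{E}))$.

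First I would observe that since $\pi^{*}\colon D^b_G(X)\to D^b_G(\mathbb{P}(\mathcal{E}))$ is an exact functor of triangulated categories, it sends thick subcategories into thick subcategories, so $\pi^{*}\langle\mathcal{A}^{\bullet}\rangle\subseteq\langle\pi^{*}\mathcal{A}^{\bullet}\rangle$. Together with the hypothesis $\langle\mathcal{A}^{\bullet}\rangle=D^b_G(X)$ this yields $\pi^{*}D^b_G(X)\subseteq\langle\pi^{*}\mathcal{A}^{\bullet}\rangle$. Next, for each $0\leq i\leq r-1$ the autoequivalence $(-)\otimes\mathcal{O}_{\mathcal{E}}(i)$ of $D^b_G(\mathbb{P}(\mathcal{E}))$ is exact and carries thick subcategories to thick subcategories, so
\[
\pi^{*}D^b_G(X)\otimes\mathcal{O}_{\mathcal{E}}(i)\subseteq\langle\pi^{*}\mathcal{A}^{\bullet}\otimes\mathcal{O}_{\mathcal{E}}(i)\rangle.
\]

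Finally, since $\pi^{*}\mathcal{A}^{\bullet}\otimes\mathcal{O}_{\mathcal{E}}(i)$ is a direct summand of $\bigoplus_{j=0}^{r-1}\pi^{*}\mathcal{A}^{\bullet}\otimes\mathcal{O}_{\mathcal{E}}(j)$ and the thick subcategory is closed under direct summands, we obtain $\langle\pi^{*}\mathcal{A}^{\bullet}\otimes\mathcal{O}_{\mathcal{E}}(i)\rangle\subseteq\langle\bigoplus_{j=0}^{r-1}\pi^{*}\mathcal{A}^{\bullet}\otimes\mathcal{O}_{\mathcal{E}}(j)\rangle$ for each $i$. Combining with the semiorthogonal decomposition (2), the right-hand side contains every block of the decomposition and therefore equals $D^b_G(\mathbb{P}(\mathcal{E}))$, as desired. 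There is no real obstacle here; the only thing to keep track of is that both $\pi^{*}$ and $(-)\otimes\mathcal{O}_{\mathcal{E}}(i)$ commute with cones, shifts and direct summands, which is exactly what makes the ``generation passes through'' step routine.
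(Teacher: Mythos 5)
Your proof is correct and takes essentially the same route as the paper: the paper phrases steps one and two as the single statement that $\mathcal{F}\mapsto\pi^{*}\mathcal{F}\otimes\mathcal{O}_{\mathcal{E}}(j)$ is an equivalence of $D^b_G(X)$ onto the block $\pi^{*}D^b_G(X)\otimes\mathcal{O}_{\mathcal{E}}(j)$, which transports the generation hypothesis exactly as your "exact functors preserve thick generation" argument does, and then concludes via the semiorthogonal decomposition (2) just as you do.
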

\begin{proof}
First we note that $D_G^b(X)$ is derived equivalent to $\pi^*D^b_G(X)\otimes \mathcal{O}_{\mathcal{E}}(j)$ via the functor $\mathcal{F}\mapsto \pi^*\mathcal{F}\otimes\mathcal{O}_{\mathcal{E}}(j)$ (see \cite{EL}). 
Since $\langle\mathcal{A}^{\bullet}\rangle=D_G^b(X)$ and $D_G^b(X)$ is derived equivalent to $\pi^*D^b_G(X)\otimes \mathcal{O}_{\mathcal{E}}(j)$, we obtain $\langle\pi^*\mathcal{A}^{\bullet}\otimes\mathcal{O}_{\mathcal{E}}(j)\rangle=\pi^*D^b_G(X)\otimes \mathcal{O}_{\mathcal{E}}(j)$. Finally $\langle\bigoplus^{r-1}_{i=0}\pi^*\mathcal{A}^{\bullet}\otimes\mathcal{O}_{\mathcal{E}}(i)\rangle=D_G^b(\mathbb{P}(\mathcal{E}))$ in view of (2).
\end{proof}
With this lemma, we now prove the following:
\begin{thm}
Let $X$, $G$ and $\mathcal{E}$ be as in Lemma 5.3. If $[X/G]$ has a tilting bundle, then so does $[\mathbb{P}(\mathcal{E})/G]$.
\end{thm}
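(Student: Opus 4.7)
The candidate, in the spirit of Orlov's projective bundle formula, is $\mathcal{S}_0 := \bigoplus_{i=0}^{r-1} \pi^*\mathcal{T} \otimes \mathcal{O}_{\mathcal{E}}(i)$. Via adjunction $\pi^* \dashv \mathbb{R}\pi_*$, the projection formula, and Lemma 2.2, the only non-automatic Ext groups reduce to
\[
\mathrm{Ext}^k_G\bigl(\pi^*\mathcal{T} \otimes \mathcal{O}_{\mathcal{E}}(i),\ \pi^*\mathcal{T} \otimes \mathcal{O}_{\mathcal{E}}(j)\bigr) \simeq H^k\bigl(X,\, \mathcal{T}^\vee \otimes \mathcal{T} \otimes S^{j-i}\mathcal{E}\bigr)^G \quad (0 < j-i < r),
\]
which need not vanish for $k>0$ in general. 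My plan is therefore to absorb this obstruction by twisting $\mathcal{E}$ by a high power of a $G$-equivariant ample line bundle on $X$, and then to invoke Serre vanishing. A single twist suffices because, in contrast to the affine-bundle setting of Theorem 5.1, only the finitely many symmetric powers $S^1\mathcal{E},\ldots,S^{r-1}\mathcal{E}$ enter.

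Concretely, I would first produce a $G$-equivariant ample line bundle $\mathcal{L}$ on $X$ via the norm construction of Example 5.2. By Serre vanishing applied to each of the coherent sheaves $\mathcal{T}^\vee \otimes \mathcal{T} \otimes S^n\mathcal{E}$ for $n = 1, \ldots, r-1$, pick $N \gg 0$ such that
\[
H^k\bigl(X,\, \mathcal{T}^\vee \otimes \mathcal{T} \otimes S^n\mathcal{E} \otimes \mathcal{L}^{\otimes nN}\bigr) = 0 \qquad \text{for all } k > 0,\ 1 \le n \le r-1.
\]
Setting $\mathcal{E}' := \mathcal{E} \otimes \mathcal{L}^{\otimes N}$, an equivariant locally free sheaf of rank $r$, one has the canonical $G$-equivariant identification $\mathbb{P}(\mathcal{E}') = \mathbb{P}(\mathcal{E})$ under which $\mathcal{O}_{\mathcal{E}'}(i) \simeq \mathcal{O}_{\mathcal{E}}(i) \otimes \pi^*\mathcal{L}^{\otimes iN}$, and the modified candidate is
\[
\mathcal{S} := \bigoplus_{i=0}^{r-1} \pi^*\mathcal{T} \otimes \mathcal{O}_{\mathcal{E}'}(i),
\]
a $G$-equivariant locally free sheaf on $\mathbb{P}(\mathcal{E})$.

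The three tilting conditions are then verified as follows. Compactness is automatic since $\mathcal{S}$ is coherent. Generation of $D_G(\mathrm{Qcoh}(\mathbb{P}(\mathcal{E})))$ follows from Lemma 5.3 applied to $\mathcal{A}^\bullet=\mathcal{T}$ on $\mathbb{P}(\mathcal{E}')$, using $\langle \mathcal{T} \rangle = D_G^b(X)$ from the tilting hypothesis. For the Ext-vanishing, the same adjunction/projection/Lemma 2.2 machinery yields
\[
\mathrm{Ext}^k_G\bigl(\pi^*\mathcal{T} \otimes \mathcal{O}_{\mathcal{E}'}(i),\ \pi^*\mathcal{T} \otimes \mathcal{O}_{\mathcal{E}'}(j)\bigr) \simeq H^k\bigl(X,\, \mathcal{T}^\vee \otimes \mathcal{T} \otimes \mathbb{R}\pi_*\mathcal{O}_{\mathcal{E}'}(j-i)\bigr)^G;
\]
the case $j=i$ is $\mathrm{Ext}^k_G(\mathcal{T},\mathcal{T}) = 0$ for $k \neq 0$; the case $-r < j-i < 0$ vanishes because $\mathbb{R}\pi_*\mathcal{O}_{\mathcal{E}'}(j-i) = 0$ in this range; and in the case $0 < j-i < r$ one has $\pi_*\mathcal{O}_{\mathcal{E}'}(j-i) = S^{j-i}\mathcal{E} \otimes \mathcal{L}^{\otimes(j-i)N}$ concentrated in degree zero, reducing the group to exactly the Serre-vanishing expression secured above. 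Finally, finiteness of $\mathrm{gldim}\,\mathrm{End}_G(\mathcal{S})$ is an application of Proposition 3.5, since $\mathbb{P}(\mathcal{E})$ is smooth projective over $k$. The main obstacle is the Ext-vanishing in the ``non-semiorthogonal'' direction $0 < j-i < r$, which Elagin's decomposition (2) does not give for free; the $\mathcal{L}^{\otimes N}$-twist is the device that converts it into a Serre-vanishing problem, and this is tractable precisely because only finitely many $n$'s enter.
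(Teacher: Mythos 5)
Your proposal is correct and follows essentially the same route as the paper's own proof: the same candidate $\bigoplus_{i=0}^{r-1}\pi^*\mathcal{T}\otimes\mathcal{O}_{\mathcal{E}}(i)$, the same reduction of the Ext groups via adjunction, the projection formula and Lemma~2.2, the same twist $\mathcal{E}'=\mathcal{E}\otimes\mathcal{L}^{\otimes N}$ by an equivariant ample line bundle to kill the finitely many problematic cohomology groups by Serre vanishing, and the same appeal to Lemma~5.3 for generation followed by the $G$-equivariant identification $\mathbb{P}(\mathcal{E}')\simeq\mathbb{P}(\mathcal{E})$. No gaps.
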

\begin{proof}
Let $\mathcal{T}$ be the tilting bundle on $[X/G]$ and $\pi\colon \mathbb{P}(\mathcal{E})\rightarrow X$ the projection. We consider the compact object $\mathcal{R}=\bigoplus^{r-1}_{i=0}\pi^*\mathcal{T}\otimes \mathcal{O}_{\mathcal{E}}(i)$.  
Equivariant adjuction of $\pi^*$ and $\pi_*$ and projection formula yield for $0\leq r_1,r_2\leq r-1$:
\begin{center}
$\mathrm{Hom}_G(\pi^{*}\mathcal{T}\otimes \mathcal{O}_{\mathcal{E}}(r_1),\pi^{*}\mathcal{T}\otimes \mathcal{O}_{\mathcal{E}}(r_2)[m])\simeq \mathrm{Hom}_G(\mathcal{T},\mathcal{T}\otimes \mathbb{R}\pi_{*}\mathcal{O}_{\mathcal{E}}(r_2-r_1)[m])$.
\end{center}
If $r_1=r_2$ we have $\mathbb{R}\pi_*\mathcal{O}_{\mathcal{E}}(r_2-r_1)\simeq \mathcal{O}_X$ and hence
\begin{center}
$\mathrm{Hom}_G(\pi^{*}\mathcal{T}\otimes \mathcal{O}_{\mathcal{E}}(r_1),\pi^{*}\mathcal{T}\otimes \mathcal{O}_{\mathcal{E}}(r_2)[m])\simeq \mathrm{Ext}^m_G(\mathcal{T},\mathcal{T})=0$
\end{center} for $m>0$, since $\mathcal{T}$ is a tilting bundle on $[X/G]$. If $0\leq r_2<r_1\leq r-1$, we have $r_2-r_1>-r$ and hence $\mathbb{R}\pi_*\mathcal{O}_{\mathcal{E}}(r_2-r_1)=0$ (see \cite{HA}). This gives
\begin{center}
$\mathrm{Hom}_G(\pi^{*}\mathcal{T}\otimes \mathcal{O}_{\mathcal{E}}(r_1),\pi^{*}\mathcal{T}\otimes \mathcal{O}_{\mathcal{E}}(r_2)[m])\simeq \mathrm{Ext}^m_G(\mathcal{T},0)=0$
\end{center} for all $m\geq 0$. It remains the case $0\leq r_1<r_2\leq r-1$. In this case we get for $l=r_2-r_1$, $\mathbb{R}\pi_*\mathcal{O}_{\mathcal{E}}(r_2-r_1)\simeq S^l(\mathcal{E})$ (see \cite{HA}) and therefore
\begin{eqnarray*}
\mathrm{Hom}_G(\pi^{*}\mathcal{T}\otimes \mathcal{O}_{\mathcal{E}}(r_1),\pi^{*}\mathcal{T}\otimes \mathcal{O}_{\mathcal{E}}(r_2)[m])&\simeq &\mathrm{Ext}^m_G(\mathcal{T},\mathcal{T}\otimes S^l(\mathcal{E}))\\
&\simeq& H^m(X,\mathcal{T}^{\vee}\otimes \mathcal{T}\otimes S^l(\mathcal{E}))^G.
\end{eqnarray*} 
To achieve the vanishing of the latter cohomology, we take an equivariant ample invertible sheaf $(\mathcal{L}, \lambda)$ on $X$. Such a $(\mathcal{L},\lambda)$ always exists as $X$ is projective (see Example 5.2). By the ampleness of $\mathcal{L}$, there is for a fixed $l>0$ a natural number $n_l \gg 0$ such that 
\begin{eqnarray*}
H^m(X,\mathcal{T}^{\vee}\otimes\mathcal{T}\otimes S^l(\mathcal{E}\otimes\mathcal{L}^{\otimes n_l}))&\simeq&H^m(X,\mathcal{T}^{\vee}\otimes\mathcal{T}\otimes S^l(\mathcal{E})\otimes\mathcal{L}^{\otimes n_l\cdot l})=0 
\end{eqnarray*} for $m>0$. Since $0<l\leq r-1$, we have only finitely many $l>0$ and we can choose $n>\mathrm{max}\{n_l\mid 0<l\leq r-1\}$ so that for $\mathcal{L}^{\otimes n}$ we have
\begin{eqnarray*}
H^m(X,\mathcal{T}^{\vee}\otimes\mathcal{T}\otimes S^l(\mathcal{E}\otimes\mathcal{L}^{\otimes n}))&\simeq&H^m(X,\mathcal{T}^{\vee}\otimes\mathcal{T}\otimes S^l(\mathcal{E})\otimes\mathcal{L}^{\otimes n\cdot l})= 0
\end{eqnarray*} for $m>0$ and all $0<l\leq r-1$. This implies the Ext vanishing of $\mathcal{R}':=\bigoplus^{r-1}_{i=0}\pi^*\mathcal{T}\otimes \mathcal{O}_{\mathcal{E}'}(i)$ on $\mathbb{P}(\mathcal{E}')$, where $\mathcal{E}'=\mathcal{E}\otimes \mathcal{L}^{\otimes n}$.

As $\mathbb{P}(\mathcal{E}')$ is projective and smooth, the compact objects of $D_G(\mathrm{Qcoh}(\mathbb{P}(\mathcal{E}')))$ are all of $D^b_G(\mathbb{P}(\mathcal{E}'))$ (see \cite{BR}, p.39). From Lemma 5.3 and Theorem 3.1 we conclude that $\mathcal{R}'$ generates $D_G(\mathrm{Qcoh}(\mathbb{P}(\mathcal{E}')))$. Hence $\mathcal{R}'=\bigoplus^{r-1}_{i=0}\pi^*\mathcal{T}\otimes \mathcal{O}_{\mathcal{E}'}(i)$ is a tilting bundle on $[\mathbb{P}(\mathcal{E}')/G]$. As $\mathbb{P}(\mathcal{E}')$ and $\mathbb{P}(\mathcal{E})$ are isomorphic as $G$-schemes, we obtain a tilting bundle on $[\mathbb{P}(\mathcal{E})/G]$.
\end{proof}

To apply the above theorem one has to find stacks $[X/G]$ admitting a tilting bundle. This can be done for instance with Theorem 4.1. Therefore, combining Theorem 4.1 and 5.4, we obtain further examples of quotient stacks with tilting bundles.
\begin{rema}
\textnormal{In the proof of Theorem 5.4 we used the ampleness of $\mathcal{L}$ to achieve the vanishing of $H^m(X,\mathcal{T}^{\vee}\otimes\mathcal{T}\otimes S^l(\mathcal{E}\otimes\mathcal{L}^{\otimes n}))$. For that reason it is not easy to generalize the result for the case where $[X/G]$ admits an arbitrary tilting object.} 
\end{rema}
\begin{exam}
\textnormal{Let $G\subset\mathrm{PGL}_{n+1}(k)$ be a finite subgroup acting linearly on $\mathbb{P}^n$. Example 4.2, shows that $[\mathbb{P}^n/G]$ has a tilting bundle. From Theorem 5.4 we get a tilting bundle on $[\mathbb{P}(\mathcal{E})/G]$ for any equivariant locally free sheaf $\mathcal{E}$ on $\mathbb{P}^n$.} 
\end{exam}
\begin{exam}
\textnormal{Let $G\subset \mathrm{PGL}_n(k)$ be a finite subgroup acting linearly on $X=\mathrm{Grass}_k(d,n)$. Example 4.4 shows that $[X/G]$ admits a tilting bundle. From Theorem 5.4 we get a tilting bundle on $[\mathbb{P}(\mathcal{E})/G]$ for any equivariant locally free sheaf $\mathcal{E}$ on $X$.}  
\end{exam}
\begin{exam}
\textnormal{Let $X$ be a Brauer--Severi variety over $k$ and $G\subset\mathrm{Aut}(X)$ a finite subgroup acting on $X$ as described in Example 4.7. Then there is a tilting bundle on $[X/G]$. From Theorem 5.4 we get a tilting bundle on $[\mathbb{P}(\mathcal{E})/G]$ for any equivariant locally free sheaf $\mathcal{E}$ on $X$.}
\end{exam}
\section{Application: Orlov's dimension conjecture}
As an application of the results of the previews sections we provide some further evidence for a conjecture on the Rouquier dimension of derived categories formulated by Orlov \cite{O1}.\\

Let $\mathcal{D}$ be a triangulated category. For two full triangulated subcategories $\mathcal{M}$ and $\mathcal{N}$ of $\mathcal{D}$ we want to denote by $\mathcal{M}\star \mathcal{N}$ the full subcategory consisting of objects $R$ such that there exists a distinguished triangles of the form
\begin{center}
$X_1\longrightarrow R\longrightarrow X_2\longrightarrow X_1[1]$,
\end{center}
where $X_1\in \mathcal{M}$ and $X_2\in \mathcal{N}$. Then set $\mathcal{M}\diamond \mathcal{N}=\langle \mathcal{M}\star \mathcal{N}\rangle$. We inductively define $\langle \mathcal{M}\rangle_i=\langle \mathcal{M}\rangle_{i-1}\diamond \langle \mathcal{M}\rangle$ and set $\langle \mathcal{M}\rangle_1$ to be $\langle \mathcal{M}\rangle$.
\begin{defi}
\textnormal{The dimension of a triangulated category $\mathcal{D}$, denoted by $\mathrm{dim}(\mathcal{D})$, is the smallest integer $n\geq 0$ such that there exists an object $A$ for which $\langle A\rangle_{n+1}= \mathcal{D}$. We define the dimension to be $\infty$ if there is no such $A$.}
\end{defi}
There is a lower and a upper bound for the dimension of the bounded derived category of coherent sheaves of a scheme $X$. Rouquier \cite{RO}, who originally introduced the notion of dimension of triangulated categories, proved that for reduced separated schemes $X$ of finite type over $k$ a lower bound is given by $\mathrm{dim}(D^b(X))\geq\mathrm{dim}(X)$ (see \cite{RO}, Proposition 7.17), whereas for smooth quasiprojective $k$-schemes $X$ an upper bound is given by $\mathrm{dim}(D^b(X))\leq 2\mathrm{dim}(X)$ (see \cite{RO}, Proposition 7.9). There is the following conjecture:
\begin{con2}(\cite{O1})
\textnormal{If $X$ is a smooth integral and separated scheme of finite type over $k$, then $\mathrm{dim}(D^b(X))=\mathrm{dim}(X)$}.
\end{con2} In loc. cit. it is proved that the conjecture holds for smooth projective curves $C$ of genus $g\geq 1$. For curves of genus $g=0$ this is an easy observation and well known. Therefore $\mathrm{dim} (D^b(C))=1$ for all smooth projective curves $C$. Additionally, the conjecture is known to be true in the following cases: 
\begin{itemize}
  \item affine schemes of finite type over $k$, certain flags and quadrics \cite{RO}.
   \item del Pezzo surfaces, certain Fano three-folds, Hirzebruch surfaces, toric surfaces with nef anti-canonical divisor and certain toric Deligne--Mumford stacks over $\mathbb{C}$ \cite{BF}.
	\end{itemize} 
Ballard and Favero \cite{BF} extended the above conjecture to certain Deligne--Mumford stacks. Hereafter, we denote by $D^b(\mathcal{X})$ the bounded derived category of the abelian category of coherent sheaves on a separated Deligne--Mumford stack $\mathcal{X}$ of finite type over $k$. For details on Deligne--Mumford stacks we refer to \cite{DM} and to the appendix of \cite{V}.
\begin{con2}(\cite{BF})
\textnormal{Let $\mathcal{X}$ be a smooth and tame Deligne--Mumford stack of finite type over $k$ with quasiprojective coarse moduli space, then $\mathrm{dim}(D^b(\mathcal{X}))=\mathrm{dim}(\mathcal{X})$.}
\end{con2}
Among others, in loc. cit. it is proved the following theorem (see \cite{BF}, Theorem 3.2).
\begin{thm}
Let $\mathcal{X}$ be a smooth, proper, tame and connected Deligne--Mumford stack with projective coarse moduli space. Suppose $\langle\mathcal{T}\rangle=D^b(\mathcal{X})$ satisfying $\mathrm{Hom}_{D^b(\mathcal{X})}(\mathcal{T},\mathcal{T}[i])=0$ for $i\neq 0$ and let $i_0$ be the largest $i$ for which $\mathrm{Hom}_{D^b(\mathcal{X})}(\mathcal{T},\mathcal{T}\otimes \omega^{\vee}_{\mathcal{X}}[i])$ is non-zero.
If $k$ is a perfect field, then $\mathrm{dim} (D^b(\mathcal{X}))=\mathrm{dim}(\mathcal{X})+i_0$. 
\end{thm}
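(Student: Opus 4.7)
The plan is to transport the problem to a finite-dimensional algebra via the tilting equivalence of Theorem 3.3, apply Rouquier's theorem identifying the Rouquier dimension with the global dimension, and then use Serre duality on $\mathcal{X}$ to read off the geometric formula $\dim\mathcal{X} + i_0$.

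First I would observe that the hypotheses on $\mathcal{T}$ exactly say it is a tilting object on $\mathcal{X}$. The analogues of Theorem 3.3 and Proposition 3.5, extended to the DM stack setting (justified since $\mathcal{X}$ is smooth, proper, tame, and has projective coarse moduli), produce a triangulated equivalence
\[
\phi = \mathbb{R}\mathrm{Hom}(\mathcal{T},-)\colon D^b(\mathcal{X}) \stackrel{\sim}{\longrightarrow} D^b(A), \qquad A := \mathrm{End}_{D^b(\mathcal{X})}(\mathcal{T}),
\]
with $A$ a finite-dimensional $k$-algebra of finite global dimension. Since the Rouquier dimension is a triangulated invariant, $\dim D^b(\mathcal{X}) = \dim D^b(A)$. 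Next, by Rouquier's theorem (\cite{RO}, Proposition 7.4) for finite-dimensional algebras of finite global dimension over a perfect field, $\dim D^b(A) = \mathrm{gldim}(A)$. Hence the theorem reduces to the algebraic identity
\[
\mathrm{gldim}(A) = \dim\mathcal{X} + i_0.
\]

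To establish this identity, use that $D^b(\mathcal{X})$ admits a Serre functor $S_{\mathcal{X}}(-) = -\otimes\omega_\mathcal{X}[\dim\mathcal{X}]$, which under $\phi$ corresponds to the Nakayama functor $\nu_A = -\otimes_A^L A^{*}$ on $D^b(A)$ (its existence as an autoequivalence is equivalent to $\mathrm{gldim}(A)<\infty$). Serre duality on $\mathcal{X}$ then gives, for $M,N\in\mathrm{Mod}(A)$,
\[
\mathrm{Ext}^n_A(M,N) \;\simeq\; \mathrm{Hom}_{D^b(\mathcal{X})}\bigl(\phi^{-1}N,\; \phi^{-1}M \otimes \omega_\mathcal{X}[\dim\mathcal{X}-n]\bigr)^{*}.
\]
After the reparametrisation $i := n - \dim\mathcal{X}$, the global dimension of $A$ equals $\dim\mathcal{X} + \max\{i\}$, where the supremum is over $i$ for which there exist objects $\mathcal{F},\mathcal{G}$ in the tilting heart $\phi^{-1}(\mathrm{Mod}(A))$ with $\mathrm{Hom}_{D^b(\mathcal{X})}(\mathcal{G}, \mathcal{F}\otimes\omega_\mathcal{X}[-i])\neq 0$. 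Since $\mathcal{T}$ generates $D^b(\mathcal{X})$, this supremum is realised by (a shift into the heart of) the universal pair $(\mathcal{F},\mathcal{G}) = (\mathcal{T}\otimes\omega_\mathcal{X}^\vee,\mathcal{T})$, and the relevant Hom-group is precisely $\mathrm{Hom}_{D^b(\mathcal{X})}(\mathcal{T}, \mathcal{T}\otimes\omega_\mathcal{X}^{\vee}[i])$, which by definition of $i_0$ is non-zero iff $i \leq i_0$. Thus $\max\{i\}=i_0$ and the desired formula follows.

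The main obstacle is the last step. Concretely, one must argue that the tilting heart contains enough twists of $\mathcal{T}$ (in particular suitable shifts of $\mathcal{T}\otimes\omega_\mathcal{X}^\vee$) so that the universal pair actually realises the supremum defining $\mathrm{gldim}(A)$, and that no accidentally larger Ext-group arises for some other pair of objects in the tilting heart; this rests on the precise interaction between the Serre functor and the $t$-structure induced by $\mathcal{T}$, together with the tilting Ext-vanishing for $\mathcal{T}$ itself. A secondary technical point is the verification that $S_{\mathcal{X}}=-\otimes\omega_\mathcal{X}[\dim\mathcal{X}]$ is indeed a genuine Serre functor on $D^b(\mathcal{X})$ for a tame DM stack, which uses Grothendieck duality through the coarse moduli morphism — this is precisely where the tameness hypothesis is essential.
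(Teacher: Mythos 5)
First, a structural point: the paper does not prove this statement at all --- it is Theorem 6.2, quoted verbatim from Ballard--Favero (\cite{BF}, Theorem 3.2) and used as a black box in Section 6. So there is no internal proof to compare against, and your attempt has to be measured against the argument in \cite{BF}. Measured that way, it has a genuine gap at the very first reduction. You invoke ``Rouquier's theorem'' in the form $\mathrm{dim}\,D^b(A)=\mathrm{gldim}(A)$ for a finite-dimensional algebra of finite global dimension over a perfect field. No such general theorem exists, and the equality is false: for the path algebra $A$ of the quiver $\bullet\rightarrow\bullet$ one has $\mathrm{gldim}(A)=1$, but $A$ is hereditary and representation-finite, so every object of $D^b(A)$ is a finite direct sum of shifts of the three indecomposable modules and their direct sum generates in a single step, giving $\mathrm{dim}\,D^b(A)=0$. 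What is true in general is only the upper bound $\mathrm{dim}\,D^b(A)\leq \mathrm{gldim}(A)$. Combined with your Serre-duality computation $\mathrm{gldim}(A)=\mathrm{dim}(\mathcal{X})+i_0$ (which is essentially correct, if sketchy: one shows $\mathrm{gldim}(A)=\mathrm{pd}_A(DA)=\max\{n:\mathrm{Ext}^n_A(DA,A)\neq 0\}$ and notes that $DA=\nu(A)$ corresponds to $\mathcal{T}\otimes\omega_{\mathcal{X}}[\mathrm{dim}\,\mathcal{X}]$ under the tilting equivalence), your argument only yields $\mathrm{dim}\,D^b(\mathcal{X})\leq \mathrm{dim}(\mathcal{X})+i_0$.

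The matching lower bound is the real content of Ballard--Favero's theorem, and it is exactly where the geometric hypotheses earn their keep: one must show that \emph{every} generator of $D^b(\mathcal{X})$, not just $\mathcal{T}$, has generation time at least $\mathrm{gldim}(A)$. This is done with the ghost/co-ghost lemma together with the Serre functor $-\otimes\omega_{\mathcal{X}}[\mathrm{dim}\,\mathcal{X}]$ (whence properness, tameness and connectedness), and perfectness of $k$ enters to make $A/\mathrm{rad}(A)$ separable, which is what guarantees a nonzero composition of $\mathrm{gldim}(A)$ radical maps to feed into the ghost lemma. Your proposal flags the wrong step as the main obstacle: the interaction of the Serre functor with the tilting heart that you worry about is a manageable finite-dimensional-algebra computation, whereas the minimality of the tilting object among all generators --- the passage from an upper bound on the generation time of $\mathcal{T}$ to an equality for the Rouquier dimension --- is omitted from your outline entirely and cannot be supplied by any purely algebraic statement about $A$.
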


We now want to apply Theorem 6.2 and results of the previews sections to produce some more examples where the above conjecture holds true.
\begin{prop}
Let $k$ be a perfect field and $X$ and $G$ as in Theorem 4.1. Suppose $X$ is connected and $\mathcal{T}$ is a coherent tilting sheaf on $[X/G]$. If $\mathrm{Hom}(\mathcal{T},\mathcal{T}\otimes \omega^{\vee}_X[i])=0$ for $i>0$ on $X$, then $\mathrm{dim}([X/G])=\mathrm{dim}(D^b([X/G]))$.
\end{prop}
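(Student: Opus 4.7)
The plan is to apply Theorem 6.2 to $\mathcal{X}=[X/G]$ with the given tilting sheaf $\mathcal{T}$, so that the whole task reduces to verifying the hypotheses of that theorem and pinning down the integer $i_0$ appearing in its conclusion.

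First I would check that $[X/G]$ fits into the framework of Theorem 6.2. Since $X$ is smooth projective and connected and $G$ is a finite group with $\mathrm{char}(k)\nmid \mathrm{ord}(G)$, the quotient stack $[X/G]$ is smooth, proper, tame and connected, and its coarse moduli space is the projective scheme $X/\!\!/G$. Moreover $\mathrm{dim}([X/G])=\mathrm{dim}(X)$, and because the canonical covering $X\to [X/G]$ is \'etale, the canonical sheaf $\omega_{[X/G]}$ is identified with $\omega_X$ together with its natural $G$-linearization; thus $\omega_{[X/G]}^\vee$ may be replaced by $\omega_X^\vee$ in all calculations. The assumption that $\mathcal{T}$ is a tilting sheaf on $[X/G]$ directly yields $\langle\mathcal{T}\rangle=D^b([X/G])$ and $\mathrm{Hom}_G(\mathcal{T},\mathcal{T}[i])=0$ for $i\neq 0$.

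Next I would identify $i_0$ by transporting the hypothesis from $X$ to $[X/G]$ via Lemma 2.2. For every $i\in\mathbb{Z}$ one has
\begin{center}
$\mathrm{Hom}_G(\mathcal{T},\mathcal{T}\otimes\omega_X^\vee[i])\simeq \mathrm{Hom}(\mathcal{T},\mathcal{T}\otimes\omega_X^\vee[i])^G$.
\end{center}
For $i>0$ the right hand side vanishes by hypothesis, and for $i<0$ it vanishes because $\mathcal{T}$ and $\mathcal{T}\otimes\omega_X^\vee$ are coherent sheaves sitting in degree $0$ on the smooth scheme $X$. Consequently $i_0\leq 0$. To pin $i_0$ down I would invoke the Rouquier lower bound, which for the smooth tame DM stack $[X/G]$ (or for its coarse moduli $X/\!\!/G$, as used in \cite{BF}) gives $\mathrm{dim}(D^b([X/G]))\geq \mathrm{dim}([X/G])$. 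Combining this with the equality $\mathrm{dim}(D^b([X/G]))=\mathrm{dim}([X/G])+i_0$ furnished by Theorem 6.2 forces $i_0\geq 0$; hence $i_0=0$ and the desired equality $\mathrm{dim}(D^b([X/G]))=\mathrm{dim}([X/G])$ follows.

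The main obstacle is ensuring that the integer $i_0$ of Theorem 6.2 is genuinely well-defined, i.e.\ that $\mathrm{Hom}_G(\mathcal{T},\mathcal{T}\otimes\omega_X^\vee[i])$ is nonzero for at least one $i$. This follows \emph{a posteriori} from the squeeze $0\leq i_0\leq 0$, which in fact forces $\mathrm{Hom}_G(\mathcal{T},\mathcal{T}\otimes\omega_X^\vee)\neq 0$: were every such Hom to vanish, the argument of Theorem 6.2 would produce an upper bound strictly below $\mathrm{dim}([X/G])$, contradicting Rouquier's lower bound.
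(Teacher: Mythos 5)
Your proposal is correct and follows essentially the same route as the paper: verify that $[X/G]$ is a smooth, proper, tame, connected DM stack with projective coarse moduli space, identify $\omega_{[X/G]}$ with $\omega_X$ plus an equivariant structure, transfer the vanishing hypothesis via Lemma 2.2, and invoke Theorem 6.2. Your extra care in showing $i_0\leq 0$ for negative shifts and pinning down $i_0=0$ via Rouquier's lower bound only makes explicit what the paper leaves implicit.
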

\begin{proof}
It is easy to check that $[X/G]$ is a smooth, proper, tame and connected Deligne--Mumford stack with projective coarse moduli space $X/\!\!/G$ (see \cite{AOV} and \cite{V}). Theorem 6.2 shows that 
we have to verify $\mathrm{Hom}_G(\mathcal{T},\mathcal{T}\otimes \omega^{\vee}_{[X/G]}[i])=0$ for $i>0$.
Here $\omega_{[X/G]}$ is the dualizing object for the stack $[X/G]$. Note that the underlying sheaf of $\omega_{[X/G]}$, considered as an object in $\mathrm{Coh}_G(X)$, is $\omega_X$ (the dualizing sheaf of $X$) with a certain equivariant structure $\lambda$. Note that $\mathcal{T}\in D^b_G(X)$. From Lemma 2.2 we get 
 \begin{center}
$\mathrm{Hom}_G(\mathcal{T},\mathcal{T}\otimes \omega^{\vee}_{[X/G]}[i])\simeq \mathrm{Hom}(\mathcal{T},\mathcal{T}\otimes \omega^{\vee}_{X}[i])^G$.
\end{center} By assumption, $\mathrm{Hom}(\mathcal{T},\mathcal{T}\otimes \omega^{\vee}_X[i])=0$ for $i>0$ and hence $\mathrm{Hom}_G(\mathcal{T},\mathcal{T}\otimes \omega^{\vee}_{[X/G]}[i])=0$ 
 for $i>0$. Theorem 6.2 yields $\mathrm{dim}([X/G])=\mathrm{dim}(D^b([X/G]))$.
\end{proof}

\begin{cor}
Let $k$ be a perfect field, $G$ a finite group acting on a smooth projective $k$-scheme $X$ and $\mathcal{E}$ an equivariant locally free sheaf of rank $r$. If $[X/G]$ admits a tilting bundle, then $\mathrm{dim}([\mathbb{P}(\mathcal{E})/G])=\mathrm{dim}(D^b([\mathbb{P}(\mathcal{E})/G]))$.
\end{cor}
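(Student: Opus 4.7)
The plan is to combine Theorem 5.4 with Proposition 6.3. Since $[X/G]$ admits a tilting bundle $\mathcal{T}$, Theorem 5.4 immediately yields a tilting bundle on $[\mathbb{P}(\mathcal{E})/G]$. Following the construction in the proof of that theorem, I would fix an equivariant ample invertible sheaf $\mathcal{L}$ on $X$ (which exists by the argument of Example 5.2), set $\mathcal{E}' = \mathcal{E} \otimes \mathcal{L}^{\otimes n}$ for $n \gg 0$, and take
\[
\mathcal{R} \;=\; \bigoplus_{i=0}^{r-1} \pi^{*}\mathcal{T} \otimes \mathcal{O}_{\mathcal{E}'}(i)
\]
as a tilting bundle on $[\mathbb{P}(\mathcal{E}')/G] \cong [\mathbb{P}(\mathcal{E})/G]$.

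Next, I would apply Proposition 6.3 with $\mathbb{P}(\mathcal{E})$ playing the role of the smooth projective $k$-scheme and $\mathcal{R}$ the tilting bundle; note that $\mathbb{P}(\mathcal{E})$ is smooth, projective, and connected provided $X$ is. This reduces the problem to verifying
\[
\mathrm{Hom}\bigl(\mathcal{R},\; \mathcal{R} \otimes \omega_{\mathbb{P}(\mathcal{E})}^{\vee}[m]\bigr) \;=\; 0 \qquad \text{for } m > 0.
\]
Using the standard identification $\omega_{\mathbb{P}(\mathcal{E}')}^{\vee} \cong \pi^{*}\bigl(\omega_{X}^{\vee} \otimes \det(\mathcal{E}')^{\vee}\bigr) \otimes \mathcal{O}_{\mathcal{E}'}(r)$, together with the adjunction $\pi^{*} \dashv \pi_{*}$, the projection formula, and $\mathbb{R}\pi_{*}\mathcal{O}_{\mathcal{E}'}(s) = S^{s}(\mathcal{E}')$ for $s \geq 0$, this Hom splits as a finite direct sum (indexed by pairs $0 \leq k, l \leq r-1$) of cohomology groups
\[
H^{m}\bigl(X,\; \mathcal{T}^{\vee} \otimes \mathcal{T} \otimes \omega_{X}^{\vee} \otimes \det(\mathcal{E}')^{\vee} \otimes S^{\,l-k+r}(\mathcal{E}')\bigr),
\]
with the exponent $l - k + r$ running over $\{1, 2, \ldots, 2r - 1\}$.

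The concluding step is to force the vanishing of these finitely many cohomology groups for $m > 0$ by enlarging $n$. By ampleness of $\mathcal{L}$, for each fixed $s \in \{1, \ldots, 2r-1\}$ there exists $n_{s}$ such that Serre vanishing annihilates the corresponding cohomology once $n \geq n_{s}$; choosing $n$ larger than $\max_{s} n_{s}$, and also large enough that $\mathcal{R}$ is already tilting as in the proof of Theorem 5.4, yields the required simultaneous vanishing. Proposition 6.3 then delivers $\dim([\mathbb{P}(\mathcal{E})/G]) = \dim(D^{b}([\mathbb{P}(\mathcal{E})/G]))$.

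The main obstacle is this uniform Serre-vanishing step: the extra twist by $\omega_{X}^{\vee} \otimes \det(\mathcal{E}')^{\vee}$ appearing in the cohomological integrand forces one to exploit ampleness of $\mathcal{L}$ beyond what sufficed for Theorem 5.4 itself, and I expect the technical core of the proof to lie in choosing $n$ carefully so that every summand indexed by a pair $(k,l)$ is handled simultaneously; if the straightforward homothetic twist is not enough for the pairs with $l < k$, one may need to refine the construction by incorporating additional $\mathcal{L}$-twists into the summands of $\mathcal{R}$ and adjusting the corresponding exponents accordingly.
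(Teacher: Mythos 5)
Your overall strategy is exactly the paper's: run the construction of Theorem 5.4 to get the tilting bundle $\mathcal{R}=\bigoplus_{i=0}^{r-1}\pi^*\mathcal{T}\otimes\mathcal{O}_{\mathcal{E}'}(i)$ on $[\mathbb{P}(\mathcal{E}')/G]$ with $\mathcal{E}'=\mathcal{E}\otimes\mathcal{L}^{\otimes n}$, then feed it into Proposition 6.3 and kill the remaining $\mathrm{Ext}$-groups by enlarging the ample twist. The difference is that you use the full formula $\omega_{\mathbb{P}(\mathcal{E}')}^{\vee}\cong\pi^{*}(\omega_X^{\vee}\otimes\det(\mathcal{E}')^{\vee})\otimes\mathcal{O}_{\mathcal{E}'}(r)$, whereas the paper simply writes $\omega^{\vee}_{\mathcal{E}'}=\mathcal{O}_{\mathcal{E}'}(r)$ and therefore only ever meets the groups $\mathrm{Ext}^l(\mathcal{T},\mathcal{T}\otimes S^{m}(\mathcal{E})\otimes\mathcal{L}^{\otimes mn})$ with $m=r+r_2-r_1\geq 1$, all of which die by Serre vanishing for $n\gg 0$.

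Your extra care exposes a genuine gap in your own concluding step. With the $\det(\mathcal{E}')^{\vee}$ factor present, the summand indexed by $(k,l)$ involves $\det(\mathcal{E}')^{\vee}\otimes S^{s}(\mathcal{E}')\cong\det(\mathcal{E})^{\vee}\otimes S^{s}(\mathcal{E})\otimes\mathcal{L}^{\otimes(s-r)n}$ with $s=l-k+r$, so the net power of $\mathcal{L}$ is $(s-r)n$. Your claim that ``for each fixed $s$ there exists $n_s$ such that Serre vanishing annihilates the corresponding cohomology once $n\geq n_s$'' is false for $s\leq r$ (i.e. $l\leq k$): for $s=r$ the twist is independent of $n$, and for $s<r$ it becomes \emph{more negative} as $n$ grows, so by Serre duality the top cohomology will generically be nonzero. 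The caveat in your last sentence is therefore not a side remark but the actual unfinished core of the argument; as written the proof does not close. Be aware, too, that this is precisely the point where your computation diverges from the paper's, which sidesteps the problem entirely via the identification $\omega^{\vee}_{\mathbb{P}(\mathcal{E}')}=\mathcal{O}_{\mathcal{E}'}(r)$ (with no $\pi^*(\omega_X^{\vee}\otimes\det(\mathcal{E}')^{\vee})$ factor); whether that identification is justified is a question you should raise about the paper, but it is not something your argument can silently inherit.
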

\begin{proof}
Denote by $\mathcal{T}$ the tilting bundle on $[X/G]$ and by $\pi\colon \mathbb{P}(\mathcal{E})\rightarrow X$ the projection. The proof of Theorem 5.4 shows that there exists an equivariant ample sheaf $(\mathcal{L},\lambda)$ such that $\mathcal{R}=\bigoplus^{r-1}_{i=0}\pi^*\mathcal{T}\otimes \mathcal{O}_{\mathcal{E}'}(i)$ is a tilting bundle on $[\mathbb{P}(\mathcal{E}')/G]$, where $\mathcal{E}'=\mathcal{E}\otimes \mathcal{L}$. As $\omega^{\vee}_{\mathcal{E}'}=\mathcal{O}_{\mathcal{E}'}(r)$, we have to verify that $\mathrm{Ext}^l(\mathcal{R},\mathcal{R}\otimes \mathcal{O}_{\mathcal{E}'}(r))=0$ for $l>0$. For $0\leq r_1,r_2\leq r-1$ we have
\begin{center}
$\mathrm{Ext}^l(\pi^{*}\mathcal{T}\otimes \mathcal{O}_{\mathcal{E}'}(r_1),\pi^{*}\mathcal{T}\otimes \mathcal{O}_{\mathcal{E}'}(r_2)\otimes \mathcal{O}_{\mathcal{E}'}(r))\simeq \mathrm{Ext}^l(\mathcal{T},\mathcal{T}\otimes \mathbb{R}\pi_{*}\mathcal{O}_{\mathcal{E}'}(r+r_2-r_1))$.
\end{center} Since $m:=r+r_2-r_1>-r$, we get $\mathbb{R}\pi_{*}\mathcal{O}_{\mathcal{E}'}(m)=S^m(\mathcal{E}')\simeq S^m(\mathcal{E})\otimes \mathcal{L}^{\otimes m}$ (see \cite{HA}). As there are only finitely many $m$, we can choose the equivariant ample sheaf $(\mathcal{L},\lambda)$ such that $H^l(X,\mathcal{T}^{\vee}\otimes \mathcal{T}\otimes S^m(\mathcal{E})\otimes \mathcal{L}^{\otimes m})=0$ for $l>0$. This gives $\mathrm{Ext}^l(\mathcal{R},\mathcal{R}\otimes \mathcal{O}_{\mathcal{E}'}(r))=0$ for $l>0$. Proposition 6.3 gives $\mathrm{dim}([\mathbb{P}(\mathcal{E}')/G])=\mathrm{dim}(D^b([\mathbb{P}(\mathcal{E}')/G]))$. As $\mathbb{P}(\mathcal{E}')$ and $\mathbb{P}(\mathcal{E})$ are isomorphic as $G$-schemes, we obtain $\mathrm{dim}([\mathbb{P}(\mathcal{E})/G])=\mathrm{dim}(D^b([\mathbb{P}(\mathcal{E})/G]))$.
\end{proof}
\begin{cor}
Let $X$ be a $n$-dimensional Brauer--Severi variety over a perfect field $k$ corresponding to a central simple algebra $A$ and $G\subset \mathrm{Aut}(X)=A^*/k^{\times}$ a finite subgroup such that the action lifts to an action of $A^*$. Then $\mathrm{dim}(D^b([X/G]))=\mathrm{dim}([X/G])$.
\end{cor}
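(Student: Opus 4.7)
The plan is to produce a tilting bundle on $[X/G]$ satisfying the cohomological hypothesis of Proposition 6.3 and then to apply that proposition directly. By Example 4.7 the sheaf $\mathcal{T}=\bigoplus_{i=0}^{n}\mathcal{V}^{\otimes i}$, where $\mathcal{V}$ is the tautological sheaf on $X$ equipped with the equivariant structure induced by the lift of the $G$-action to $A^{*}$, is a tilting bundle on $[X/G]$. Since $X$ is a Brauer--Severi variety it becomes isomorphic to $\mathbb{P}^n$ after base change to $\bar k$, hence is geometrically integral and in particular connected. Also $X/\!\!/G$ is projective, so $[X/G]$ is a smooth, proper, tame and connected Deligne--Mumford stack with projective coarse moduli space, exactly as required by Proposition 6.3.

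To apply Proposition 6.3 the key verification is that
\[
\mathrm{Hom}_{D^b(X)}(\mathcal{T},\mathcal{T}\otimes \omega_X^{\vee}[i])=0 \quad \text{for all } i>0.
\]
Because these are finite-dimensional $k$-vector spaces, it suffices to check the vanishing after base change to the algebraic closure $\bar k$. Here I would use flat base change along $v\colon X\otimes_k\bar k\to X$ in the spirit of Lemma 4.5, together with the isomorphism
\[
v^{*}\mathcal{V}^{\otimes i}\;\simeq\;\mathcal{O}_{\mathbb{P}^n}(-i)^{\oplus (n+1)^i}
\]
recalled from \cite{BLU} and used in Example 4.7, and the fact that $\omega_X\otimes_k\bar k\simeq\omega_{\mathbb{P}^n}\simeq\mathcal{O}_{\mathbb{P}^n}(-n-1)$, so $v^{*}\omega_X^{\vee}\simeq\mathcal{O}_{\mathbb{P}^n}(n+1)$.

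Then each Hom group in question is, after base change and taking direct summands, a direct sum of groups of the form
\[
\mathrm{Ext}^{i}_{\mathbb{P}^n}\!\bigl(\mathcal{O}_{\mathbb{P}^n}(-a),\mathcal{O}_{\mathbb{P}^n}(-b+n+1)\bigr)\;\simeq\; H^{i}\!\bigl(\mathbb{P}^n,\mathcal{O}_{\mathbb{P}^n}(a-b+n+1)\bigr)
\]
with $0\le a,b\le n$. The twist $a-b+n+1$ satisfies $a-b+n+1\ge 1$, so by the standard computation of the cohomology of line bundles on $\mathbb{P}^n$ these groups vanish for every $i>0$. Hence the hypothesis of Proposition 6.3 is satisfied and Proposition 6.3 yields $\mathrm{dim}(D^b([X/G]))=\mathrm{dim}([X/G])$.

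The only real step requiring care is the flat base change that reduces the computation on $X$ to one on $\mathbb{P}^n$; once that is in place the cohomology calculation is immediate from $a-b+n+1\ge 1$ on the nose. Nothing about the equivariant structure enters here because Proposition 6.3 only asks for vanishing of Hom groups on $X$ itself, not on $[X/G]$.
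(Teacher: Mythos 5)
Your overall route is the same as the paper's: exhibit the tilting bundle coming from Example 4.7, base change to $\bar k$ using $\mathcal{V}^{\otimes i}\otimes_k\bar k\simeq\mathcal{O}_{\mathbb{P}^n}(-i)^{\oplus(n+1)^i}$ and $\omega_X\otimes_k\bar k\simeq\mathcal{O}_{\mathbb{P}^n}(-n-1)$, and finish by the vanishing of $H^i(\mathbb{P}^n,\mathcal{O}(a-b+n+1))$ for $i>0$, which is correct since $a-b+n+1\geq 1$ for $0\leq a,b\leq n$. The reduction to $\bar k$ via flat base change and the cohomology computation are both fine.

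There is, however, one genuine error at the start: Example 4.7 (via Proposition 4.6, hence Theorem 4.2) does \emph{not} say that $\mathcal{T}=\bigoplus_{i=0}^n\mathcal{V}^{\otimes i}$ is a tilting bundle on $[X/G]$; it says that $\mathcal{T}\otimes k[G]$ is. This is not cosmetic: a single equivariant structure on $\mathcal{T}$ will in general fail the generation axiom on $[X/G]$. For instance, for a nontrivial irreducible representation $W$ the object $\mathcal{O}_X\otimes W$ need not be detected by $\mathrm{Hom}_G(\mathcal{T},-)$, since $\mathrm{Hom}_G(\mathcal{T},\mathcal{O}_X\otimes W[i])\simeq(\mathrm{Hom}(\mathcal{T},\mathcal{O}_X[i])\otimes W)^G$ can vanish for all $i$; this is exactly why Theorem 4.2 tensors with the regular representation. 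Consequently Proposition 6.3 cannot be applied to your $\mathcal{T}$, but to $\mathcal{T}\otimes k[G]$, and you must additionally check $\mathrm{Ext}^l(\mathcal{T}\otimes k[G],\mathcal{T}\otimes k[G]\otimes\omega_X^{\vee})=0$ for $l>0$ on $X$. The repair is short and is carried out in the paper: writing $k[G]=\bigoplus_j W_j^{\oplus\dim W_j}$ one has
\[
\mathrm{Ext}^l(\mathcal{T}\otimes W_r,\mathcal{T}\otimes W_s\otimes\omega_X^{\vee})\simeq \mathrm{Ext}^l(\mathcal{T},\mathcal{T}\otimes\omega_X^{\vee})\otimes\mathrm{Hom}(W_r,W_s),
\]
so the vanishing you established for $\mathcal{T}$ does give the vanishing for $\mathcal{T}\otimes k[G]$. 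With that correction your argument coincides with the paper's proof.
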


\begin{proof}
Denote by $\mathcal{V}$ the tautological sheaf on $X$ and let $\mathcal{T}=\bigoplus^n_{i=0}\mathcal{V}^{\otimes i}$. Note that $X\otimes_k \bar{k}\simeq\mathbb{P}^n$. We know from Example 4.7 that $\mathcal{T}\otimes k[G]$ is a tilting bundle on $[X/G]$. According to Proposition 6.3 we have to verify $\mathrm{Ext}^l(\mathcal{T}\otimes k[G],\mathcal{T}\otimes k[G]\otimes \omega^{\vee}_X)=0$ for $l>0$. Notice that $\mathcal{V}^{\otimes i}\otimes_k \bar{k}\simeq \mathcal{O}_{\mathbb{P}^n}(-i)^{\oplus (n+1)^i}$ and $\omega_X=\mathcal{O}_X(-n-1)$. One easily verifies $\mathrm{Ext}^l(\mathcal{T},\mathcal{T}\otimes \omega^{\vee}_X)\otimes_k \bar{k}=0$ for $l>0$ on $\mathbb{P}^n$. Thus $\mathrm{Ext}^l(\mathcal{T},\mathcal{T}\otimes \omega^{\vee}_X)=0$ for $l>0$ on $X$. As $k[G]=\bigoplus_j W^{\oplus\mathrm{dim}(W_j)}_j$, it is enough to consider
\begin{center}
$\mathrm{Ext}^l(\mathcal{T}\otimes W_r,\mathcal{T}\otimes W_s\otimes\omega^{\vee}_X)\simeq \mathrm{Ext}^l(\mathcal{T},\mathcal{T}\otimes\omega^{\vee}_X)\otimes \mathrm{Hom}(W_r,W_s)$.
\end{center} Now $\mathrm{Ext}^l(\mathcal{T},\mathcal{T}\otimes \omega^{\vee}_X)=0$ for $l>0$ implies $\mathrm{Ext}^l(\mathcal{T}\otimes k[G],\mathcal{T}\otimes k[G]\otimes \omega^{\vee}_X)=0$ for $l>0$.
\end{proof}

\begin{cor}
Let $k$ be an algebraically closed field of characteristic zero and $G$ a finite subgroup of $\mathrm{PGL}_n(k)$ acting linearly on $X=\mathrm{Grass}(d,n)$. Then $\mathrm{dim}([X/G])=\mathrm{dim} (D^b([X/G]))$.
\end{cor}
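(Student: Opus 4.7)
The plan is to apply Proposition 6.3 to $[X/G]$, feeding in Kapranov's tilting bundle on the Grassmannian. Example 4.4 produces the tilting bundle $\mathcal{T} = \bigoplus_{\lambda} \Sigma^\lambda(\mathcal{S})$ on $X = \mathrm{Grass}_k(d,n)$ together with its natural equivariant structure, where $\lambda$ runs over Young diagrams inside the $d \times (n-d)$ rectangle, and Theorem 4.2 then upgrades this to the tilting bundle $\mathcal{T}_G := \mathcal{T} \otimes k[G]$ on $[X/G]$. Since $k$ is algebraically closed of characteristic zero, $X$ is smooth projective and connected, and $G$ is finite, the quotient $[X/G]$ is a smooth, proper, tame, connected Deligne--Mumford stack with projective coarse moduli, so the hypotheses of Proposition 6.3 are in place, and the task reduces to verifying
\[
\mathrm{Ext}^l(\mathcal{T}_G,\, \mathcal{T}_G \otimes \omega_X^\vee) = 0 \quad \text{for all } l > 0
\]
on $X$.

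First, I would strip off the regular representation exactly as in the proof of Corollary 6.5. Writing $k[G] = \bigoplus_j W_j^{\oplus \dim W_j}$ and using
\[
\mathrm{Ext}^l(\mathcal{T} \otimes W_r,\, \mathcal{T} \otimes W_s \otimes \omega_X^\vee) \simeq \mathrm{Ext}^l(\mathcal{T},\, \mathcal{T} \otimes \omega_X^\vee) \otimes \mathrm{Hom}(W_r, W_s),
\]
the vanishing for $\mathcal{T}_G$ reduces to $\mathrm{Ext}^l(\mathcal{T}, \mathcal{T} \otimes \omega_X^\vee) = 0$ for $l > 0$, and further to the cohomology vanishing $H^l\bigl(X,\, \Sigma^\lambda(\mathcal{S})^\vee \otimes \Sigma^\mu(\mathcal{S}) \otimes \omega_X^\vee\bigr) = 0$ for $l > 0$ and every Kapranov pair $(\lambda, \mu)$.

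Next, I would establish this cohomology vanishing on the Grassmannian. Here $\omega_X^\vee \simeq \mathcal{O}_X(n)$ in the Pl\"ucker embedding. Expanding $\Sigma^\lambda(\mathcal{S})^\vee \otimes \Sigma^\mu(\mathcal{S})$ via the Littlewood--Richardson rule produces a finite direct sum of irreducible homogeneous bundles on $X = \mathrm{GL}_n/P$ associated to rational Schur weights $\nu = (\nu_1, \ldots, \nu_d)$ with $\nu_1 \geq \cdots \geq \nu_d$ and $\nu_d \geq -(n - d)$. Twisting by $\mathcal{O}_X(n)$ shifts the corresponding $\mathrm{GL}_n$-highest weight by $n$ in the first $d$ coordinates, producing $(\nu_1 + n, \ldots, \nu_d + n, 0, \ldots, 0)$; the bound $\nu_d + n \geq d > 0$ places this weight strictly in the dominant chamber for $\mathrm{GL}_n$, so Borel--Weil--Bott on $\mathrm{GL}_n/P$ concentrates the cohomology of each summand in degree zero and delivers the required vanishing.

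The principal obstacle is the weight bookkeeping in the third step, namely controlling the minimal entry $\nu_d$ of every Littlewood--Richardson summand of $\Sigma^\lambda(\mathcal{S})^\vee \otimes \Sigma^\mu(\mathcal{S})$ and confirming that the $\omega_X^\vee$-twist pushes the resulting $\mathrm{GL}_n$-weight strictly into the dominant chamber, so that no singular Weyl orbit enters and all higher cohomology is forced to vanish. Once this is secured, Proposition 6.3 yields $\mathrm{dim}([X/G]) = \mathrm{dim}(D^b([X/G]))$, as claimed.
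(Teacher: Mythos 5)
Your proposal matches the paper's proof essentially step for step: Example 4.4 plus Theorem 4.2 give the tilting bundle $\mathcal{T}\otimes k[G]$, the regular representation is stripped off via the same tensor isomorphism, the Littlewood--Richardson bound $\gamma_d\geq -(n-d)$ combined with the twist by $\omega_X^\vee=\mathcal{O}_X(n)$ yields $\gamma_d+n\geq d>0$, and the resulting dominance forces the higher cohomology to vanish (the paper cites Kapranov's cohomology computation for $\Sigma^{\alpha}(\mathcal{S}^{\vee})$, which is exactly the Borel--Weil--Bott statement you invoke). The argument is correct and takes the same route as the paper.
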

\begin{proof}
Let $\mathcal{T}=\bigoplus_{\lambda}\Sigma^{\lambda}(\mathcal{S})$, where $\mathcal{S}$ is the tautological sheaf on $X$ and $\Sigma^{\lambda}$ the Schur functor (see \cite{KA}). From Example 4.4 and Theorem 4.2 we know that $\mathcal{T}\otimes k[G]$ is a tilting bundle on $[X/G]$. Note that $\omega_X=\mathcal{O}_X(-n)$. To apply Proposition 6.3 we have to verify $\mathrm{Ext}^i(\mathcal{T}\otimes k[G],\mathcal{T}\otimes k[G]\otimes \mathcal{O}_X(n))=0$ for $i>0$. By the isomorphism 
\begin{center}
$\mathrm{Ext}^i(\mathcal{T}\otimes W_r,\mathcal{T}\otimes W_s\otimes \mathcal{O}_X(n) )\simeq \mathrm{Ext}^i(\mathcal{T},\mathcal{T}\otimes \mathcal{O}_X(n))\otimes \mathrm{Hom}(W_r,W_s)$
\end{center}
it is enough to show $\mathrm{Ext}^i(\Sigma^{\lambda}(\mathcal{S}),\Sigma^{\mu}(\mathcal{S})\otimes \mathcal{O}_X(n))\simeq H^i(X,\Sigma^{\lambda}(\mathcal{S}^{\vee})\otimes\Sigma^{\mu}(\mathcal{S})\otimes \mathcal{O}_X(n))=0$ for $i>0$. 

It follows from the Littlewood--Richardson rule that for each irreducible summand $\Sigma^{\gamma}(\mathcal{S})\subset \mathcal{H}om(\Sigma^{\lambda}(\mathcal{S}),\Sigma^{\mu}(\mathcal{S}))\simeq \Sigma^{\lambda}(\mathcal{S}^{\vee})\otimes\Sigma^{\mu}(\mathcal{S})$, $\gamma=(\lambda_1,...,\lambda_d)$ satisfies $\gamma_1\geq \gamma_2\geq...\geq\gamma_d\geq -(n-d)$ (see \cite{KA2}, 3.3). So we can restrict ourselves to show
\begin{center}
$H^i(X,\Sigma^{\gamma}(\mathcal{S})\otimes\mathcal{O}_X(n))=0$ for $i>0$. 
\end{center} Since $\Sigma^{\gamma}(\mathcal{S})\otimes \mathcal{O}_X(n)\simeq \Sigma^{\gamma+n}(\mathcal{S})\simeq \Sigma^{-\gamma-n}(\mathcal{S}^{\vee})$, where $\gamma+n=(\gamma_1+n,...,\gamma_d+n)$, we have $\gamma_1+n\geq \gamma_2+n\geq...\geq\gamma_d+n\geq d$. The calculation of the cohomology of $\Sigma^{\alpha}(\mathcal{S}^{\vee})$ (see \cite{KA}, Lemma 2.2 or \cite{KA2}, Lemma 3.2) gives $H^i(X,\Sigma^{\gamma+n}(\mathcal{S}))=0$ for $i>0$. 
Proposition 6.3 then implies $\mathrm{dim}([X/G])=\mathrm{dim}( D^b([X/G]))$. 
\end{proof}

\begin{prop}
Let $k$, $X$, $G$ and $Y\subset \mathrm{Hilb}_G(X)$ be as in Corollary 4.11 ($\omega_X$ is supposed to be locally trivial in $\mathrm{Coh}_G(X)$). Assume $\mathrm{dim}(Y\times_{X/\!\!/G} Y)<\mathrm{dim}(X)+1$. If $[X/G]$ has a coherent tilting sheaf $\mathcal{T}$ satisfying $\mathrm{Ext}^i(\mathcal{T},\mathcal{T}\otimes \omega^{\vee}_X)=0$ for $i>0$, then $\mathrm{dim}(Y)=\mathrm{dim}(D^b(Y))$.
\end{prop}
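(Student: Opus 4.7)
The plan is to combine three ingredients already established in the paper: the derived McKay correspondence (Theorem 4.9), the invariance of Rouquier dimension under triangulated equivalence, and Proposition 6.3 applied to $[X/G]$. The proof is essentially a bookkeeping exercise chaining these together.

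First, the hypotheses -- $\omega_X$ locally trivial in $\mathrm{Coh}_G(X)$ and $\mathrm{dim}(Y\times_{X/\!\!/G}Y)<\mathrm{dim}(X)+1$ -- are precisely those required for Theorem 4.9. That theorem produces an exact equivalence
\[
\mathbb{R}q_{*}\circ p^{*}\colon D^{b}(Y)\stackrel{\sim}{\longrightarrow} D^{b}_{G}(X)\simeq D^{b}([X/G]),
\]
and identifies $\tau\colon Y\to X/\!\!/G$ as a birational (crepant) resolution. Since the Rouquier dimension depends only on the triangulated category up to equivalence, this immediately yields $\mathrm{dim}(D^{b}(Y))=\mathrm{dim}(D^{b}([X/G]))$. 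Moreover, because $\tau$ is birational and the coarse moduli map $[X/G]\to X/\!\!/G$ is of relative dimension zero, we also have
\[
\mathrm{dim}(Y)=\mathrm{dim}(X/\!\!/G)=\mathrm{dim}(X)=\mathrm{dim}([X/G]).
\]

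Second, as $k$ is algebraically closed of characteristic zero (hence perfect) and, by assumption, $\mathcal{T}$ is a coherent tilting sheaf on $[X/G]$ with $\mathrm{Ext}^{i}(\mathcal{T},\mathcal{T}\otimes\omega_{X}^{\vee})=0$ for $i>0$, we may invoke Proposition 6.3 directly (applying it on each connected component of $X$ if necessary, since both sides of the dimension equality behave additively). This gives $\mathrm{dim}([X/G])=\mathrm{dim}(D^{b}([X/G]))$. Chaining the three identities yields
\[
\mathrm{dim}(Y)=\mathrm{dim}([X/G])=\mathrm{dim}(D^{b}([X/G]))=\mathrm{dim}(D^{b}(Y)),
\]
which is the claim.

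There is no serious obstacle: every nontrivial input is already proved elsewhere in the paper, and the only minor subtlety is matching the connectedness hypothesis of Proposition 6.3, which is harmless since the Rouquier dimension of a finite disjoint union of smooth stacks equals the maximum over the components, and the analogous statement holds on the $Y$-side since one takes the irreducible component of $\mathrm{Hilb}_{G}(X)$ containing the free orbits.
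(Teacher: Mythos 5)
Your proof is correct and follows essentially the same route as the paper: invoke Proposition 6.3 to get $\mathrm{dim}([X/G])=\mathrm{dim}(D^b([X/G]))$, use the McKay equivalence $D^b(Y)\simeq D^b_G(X)$ together with the invariance of Rouquier dimension under triangulated equivalence, and chain the dimension equalities $\mathrm{dim}(Y)=\mathrm{dim}(X/\!\!/G)=\mathrm{dim}(X)=\mathrm{dim}([X/G])$. Your extra remark about connectedness is a harmless refinement the paper glosses over.
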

\begin{proof}
By assumption, $k$ is of characteristic zero and hence perfect. Proposition 6.3 gives $\mathrm{dim}([X/G])=\mathrm{dim}(D^b_G(X))$. Since $\mathrm{dim}(Y)=\mathrm{dim}(X/\!\!/G)=\mathrm{dim}(X)=\mathrm{dim}([X/G])=\mathrm{dim}(D^b_G(X))$, the McKay equivalence $D^b(Y)\stackrel{\sim}\rightarrow D^b_G(X)$ yields $\mathrm{dim}(Y)=\mathrm{dim}(D^b(Y))$. 
\end{proof}
Summarizing the above observations we obtain the following theorem.
\begin{thm}
The dimension conjecture holds for:
\begin{itemize}
  \item[\bf(i)] quotient stacks $[\mathbb{P}(\mathcal{E})/G]$ as in Theorem 5.4, provided $[X/G]$ has a tilting bundle and $k$ is perfect.
	\item[\bf(ii)] quotient stacks $[X/G]$ over a perfect field $k$, where $X$ is a Brauer--Severi variety corresponding to a central simple algebra $A$ and $G\subset \mathrm{Aut}(X)=A^*/k^{\times}$ a finite subgroup such that the action lifts to an action of $A^*$.  
	\item[\bf(iii)] quotient stacks $[\mathrm{Grass}(d,n)/G]$ over an algebraically closed field $k$ of characteristic zero, provided $G\subset\mathrm{PGL}_n(k)$ is a finite subgroup acting linearly on $\mathrm{Grass}(d,n)$.
\item[\bf (iv)] $G$-Hilbert schemes $\mathrm{Hilb}_G(\mathbb{P}^n)$ over an algebraically closed field $k$ of characteristic zero, provided $n\leq 3$ and $G\subset \mathrm{PGL}_{n+1}(k)$ is a finite subgroup acting linearly on $\mathbb{P}^n$ and $\omega_{\mathbb{P}^n}$ is locally trivial in $\mathrm{Coh}_G(\mathbb{P}^n)$.
	\end{itemize} 
\end{thm}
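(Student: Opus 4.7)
The plan is to deduce each clause from the corresponding specific result already proved earlier in Section 6, using as the unifying mechanism Proposition 6.3: once a tilting bundle on a quotient stack $[X/G]$ is available, the dimension conjecture reduces via Theorem 6.2 to verifying $\mathrm{Hom}_G(\mathcal{T},\mathcal{T}\otimes\omega^{\vee}_{[X/G]}[i])=0$ for $i>0$, and by Lemma 2.2 this is controlled by ordinary Ext groups on $X$. So each part amounts to producing a tilting bundle and then checking an Ext-vanishing statement for $\mathcal{T}\otimes\omega_X^{\vee}$.

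For \textbf{(i)} one invokes Corollary 6.4 directly. The essential content there is that the given tilting bundle on $[X/G]$ combined with Theorem 5.4 produces the tilting bundle $\mathcal{R}=\bigoplus_{i=0}^{r-1}\pi^*\mathcal{T}\otimes \mathcal{O}_{\mathcal{E}'}(i)$ on $[\mathbb{P}(\mathcal{E}')/G]$ after a sufficiently positive equivariant ample twist $\mathcal{E}'=\mathcal{E}\otimes\mathcal{L}^{\otimes n}$, and since $\omega^{\vee}_{\mathbb{P}(\mathcal{E}')}$ relative to $X$ is $\mathcal{O}_{\mathcal{E}'}(r)$, one can take $n$ large enough that both the tilting condition and the extra Ext-vanishing against $\omega^{\vee}$ hold simultaneously by Serre vanishing. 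For \textbf{(ii)} one invokes Corollary 6.5: the tilting bundle is $\mathcal{T}\otimes k[G]$ with $\mathcal{T}=\bigoplus_{i=0}^n\mathcal{V}^{\otimes i}$ from Example 4.7, and after base change to $\bar k$ one has $\mathcal{V}^{\otimes i}\otimes_k\bar k\simeq \mathcal{O}_{\mathbb{P}^n}(-i)^{\oplus(n+1)^i}$ and $\omega^{\vee}_X\otimes_k\bar k=\mathcal{O}_{\mathbb{P}^n}(n+1)$, so the required $\mathrm{Ext}^l(\mathcal{T},\mathcal{T}\otimes\omega_X^{\vee})=0$ for $l>0$ reduces to a direct Bott/Serre computation on $\mathbb{P}^n$; the tensor factor $k[G]=\bigoplus_j W_j^{\oplus\dim W_j}$ only contributes $\otimes \mathrm{Hom}(W_r,W_s)$ terms, which does not affect vanishing.

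For \textbf{(iii)} one invokes Corollary 6.6. The tilting bundle on $[\mathrm{Grass}(d,n)/G]$ is $(\bigoplus_\lambda \Sigma^\lambda(\mathcal{S}))\otimes k[G]$ from Example 4.4 and Theorem 4.2, and since $\omega_X=\mathcal{O}_X(-n)$ the condition of Proposition 6.3 becomes $H^i(X,\Sigma^\lambda(\mathcal{S}^{\vee})\otimes\Sigma^\mu(\mathcal{S})\otimes\mathcal{O}_X(n))=0$ for $i>0$. Here one uses the Littlewood--Richardson rule to bound from below the parts of any partition $\gamma$ indexing an irreducible summand $\Sigma^\gamma(\mathcal{S})$ of $\Sigma^\lambda(\mathcal{S}^{\vee})\otimes\Sigma^\mu(\mathcal{S})$, then after twisting by $\mathcal{O}_X(n)\simeq\Sigma^{(n,\ldots,n)}$ identifies $\Sigma^\gamma(\mathcal{S})\otimes\mathcal{O}_X(n)\simeq \Sigma^{-\gamma-n}(\mathcal{S}^\vee)$ whose parts all become $\geq d$, placing us in the range of Kapranov's vanishing lemma \cite{KA}. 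For \textbf{(iv)} one invokes Proposition 6.7 with $X=\mathbb{P}^n$: the hypothesis $n\le 3$ guarantees irreducibility of $\mathrm{Hilb}_G(\mathbb{P}^n)$ and the McKay equivalence $D^b(\mathrm{Hilb}_G(\mathbb{P}^n))\simeq D^b_G(\mathbb{P}^n)$ per Theorem 4.9, the tilting bundle is $(\bigoplus_{i=0}^n\mathcal{O}_{\mathbb{P}^n}(i))\otimes k[G]$ from Example 4.3, and $\mathrm{Ext}^l(\mathcal{T},\mathcal{T}\otimes\mathcal{O}_{\mathbb{P}^n}(n+1))=0$ for $l>0$ is immediate from Serre's computation of $H^*(\mathbb{P}^n,\mathcal{O}(m))$.

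The main obstacle in this program is clause (iii), where one must simultaneously track the weights produced by Littlewood--Richardson and match them against the precise range in which Kapranov's cohomology vanishing for Schur functors of the tautological subbundle applies; (i) relies on a delicate double use of ampleness (once to get tilting, once to get the $\omega^\vee$-vanishing), but both twists can be absorbed into a single large $n$ since only finitely many values of $l$ matter. The remaining parts (ii) and (iv) are essentially bookkeeping on top of standard projective-space cohomology.
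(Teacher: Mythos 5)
Your proposal is correct and follows exactly the paper's route: the paper proves Theorem 6.8 simply by ``summarizing the above observations,'' i.e.\ citing Corollary 6.4 for (i), Corollary 6.5 for (ii), Corollary 6.6 for (iii), and Proposition 6.7 (in the setting of Corollary 4.12) for (iv), and the supporting details you sketch for each clause (the double ampleness twist, the base change to $\bar k$ for the Brauer--Severi case, the Littlewood--Richardson plus Kapranov vanishing argument, and the McKay equivalence for $n\le 3$) match the paper's own proofs of those intermediate results.
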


{\small MATHEMATISCHES INSTITUT, HEINRICH--HEINE--UNIVERSIT\"AT 40225 D\"USSELDORF, GERMANY}\\
E-mail adress: novakovic@math.uni-duesseldorf.de

\end{document}